\theoremstyle{plain}
\numberwithin{equation}{section}
\theoremstyle{plain}
\newtheorem{theorem}{Theorem}[section]
\newtheorem{corollary}[theorem]{Corollary}
\newtheorem{lemma}[theorem]{Lemma}
\newtheorem{proposition}[theorem]{Proposition}
\theoremstyle{definition}
\newtheorem{example}[theorem]{Example}
\newtheorem{remark}[theorem]{Remark}
\newcommand{\ands}{\quad\mbox{and}\quad}
\newcommand{\kernel}{{\mathrm{Ker}\, }}
\newcommand{\degr}{{\mathrm{deg}\, }}
\newcommand{\Dom}{{\mathrm{Dom}}}
\newcommand{\codim}{{\mathrm{codim}\, }}
\newcommand{\Index}{{\mathrm{Index}}}
\newcommand{\Rat}{{\mathrm{Rat}}}
\newcommand{\Ran}{{\mathrm{Ran}\, }}
\newcommand{\diag}{{\mathrm{Diag}}}
\newcommand{\cP}{{\mathcal P}}
\newcommand{\BC}{{\mathbb C}}
\newcommand{\BT}{{\mathbb T}}
\newcommand{\BD}{{\mathbb D}}
\newcommand{\BN}{{\mathbb N}}
\newcommand{\BP}{{\mathbb P}}
\newcommand{\wtil}[1]{{\widetilde{#1}}}
\newcommand{\what}[1]{{\widehat{#1}}}
\newcommand{\al}{\alpha}
\newcommand{\La}{\Lambda}
\newcommand{\Up}{\Upsilon}
\newcommand{\om}{\omega}\newcommand{\Om}{\Omega}
\newcommand{\ov}[1]{{\overline{#1}}}
\newcommand{\un}[1]{{\underline{#1}}}
\newcommand{\sbm}[1]{\left(\begin{smallmatrix} #1\end{smallmatrix}\right)}
\newcommand{\mat}[1]{\ensuremath{\begin{pmatrix} #1 \end{pmatrix}}}
\begin{document}


\title{A Toeplitz-like operator with rational matrix symbol having poles on the unit circle: Fredholm properties}

\author[G.J. Groenewald]{G.J. Groenewald}
\address{School of Mathematical and Statistical Sciences,
North-West University,
Research Focus Area: Pure and Applied Analytics,
Private Bag X6001,
Potchefstroom 2520,
South Africa.}
\email{Gilbert.Groenewald@nwu.ac.za}

\author[S. ter Horst]{S. ter Horst}
\address{School of Mathematical and Statistical Sciences,
North-West University,
Research Focus Area: Pure and Applied Analytics,
Private Bag X6001,
Potchefstroom 2520,
South Africa and DSI-NRF Centre of Excellence in Mathematical and Statistical Sciences (CoE-MaSS)}
\email{Sanne.TerHorst@nwu.ac.za}

\author[J. Jaftha]{J. Jaftha}
\address{Numeracy Center, University of Cape Town, Rondebosch 7701; Cape Town; South Africa}
\email{Jacob.Jaftha@uct.ac.za}

\author[A.C.M. Ran]{A.C.M. Ran}
\address{Department of Mathematics, Faculty of Science, VU Amsterdam, De Boelelaan 1111, 1081 HV Amsterdam, The Netherlands and Research Focus Area: Pure and Applied Analytics, North-West~University, Potchefstroom, South Africa}
\email{a.c.m.ran@vu.nl}

\thanks{This work is based on the research supported in part by the National Research Foundation of South Africa (Grant Numbers 118513 and 127364).}

\subjclass{Primary 47B35, 47A53; Secondary 47A68}

\keywords{Toeplitz operators, unbounded operators, Fredholm properties, rational matrix functions, Wiener-Hopf factorization}

\begin{abstract}
This paper concerns the analysis of an unbounded Toeplitz-like operator generated by a rational matrix function having poles on the unit circle $\BT$. It extends the analysis of such operators generated by scalar rational functions with poles on $\BT$ found in \cite{GtHJR1,GtHJR2,GtHJR3}. A Wiener-Hopf type factorization of rational matrix functions with poles and zeroes on $\BT$ is proved and then used to analyze the Fredholm properties of such Toeplitz-like operators. A formula for the index, based on the factorization, is given. Furthermore, it is shown that the determinant of the matrix function having no zeroes on $\BT$ is not sufficient for the Toeplitz-like operator to be Fredholm, in contrast to the classical case.
\end{abstract}


\maketitle

\section{Introduction}

This paper is a continuation of \cite{GtHJR1,GtHJR2,GtHJR3}, where Toeplitz-like operators with rational symbols with poles on the unit circle $\BT$ were studied. Whilst the aim of \cite{GtHJR1,GtHJR2,GtHJR3} was to analyze such Toeplitz-like operators with scalar symbols, in this paper we will focus on such Toeplitz-like operators with matrix symbol.

First we introduce some notation. For positive integers $m$ and $n$, let $\Rat^{m\times n}$ denote the space of $m \times n$ rational matrix functions, abbreviated to $\Rat^{m}$ if $n=1$. We write $\Rat^{m\times  n}(\BT)$ for the functions in $\Rat^{m\times n}$ with poles only on $\BT$ and with $\Rat_0^{m\times  n}(\BT)$ we indicate the functions in $\Rat^{m\times n}$ that are strictly proper, that is, whose limit at $\infty$ exists and is equal to the zero-matrix. Also here, if $n=1$ we write $\Rat^{m}(\BT)$ and $\Rat_0^{m}(\BT)$ instead of $\Rat^{m\times 1}(\BT)$ and $\Rat_0^{m\times 1}(\BT)$, respectively. In the scalar case, i.e., $m=n=1$, we simply write $\Rat$, $\Rat(\BT)$ and $\Rat_0(\BT)$, as was done in \cite{GtHJR1,GtHJR2,GtHJR3}.

A rational matrix function has a pole at $z$ if any of its entries has a pole at $z$. A zero of a square rational matrix function $\Om$ is a pole of its inverse $\Om^{-1}(z)=\Om(z)^{-1}$, see for example \cite{DS74}. In the case of a square matrix polynomial $P$, the zeroes coincide with the zeroes of the polynomial $\det P(z)$, but for square rational matrix functions, zeroes can also occur at points where the determinant is nonzero.

The space of $m\times n$ matrix polynomials will be denoted by $\cP^{m\times n}$, abbreviated to $\cP^m$ if $n=1$ and to $\cP$ if $m=n=1$. For all positive integers $k$ we write $\cP^{m\times n}_k$, $\cP^{m}_k$ and $\cP_k$ for the polynomials in $\cP^{m\times n}$, $\cP^{m}$ and $\cP$ of degree at most $k$. By $L_m^p$ and $H_m^p$ we shall mean vector-functions with $m$-components who are all in $L^p$ and $H^p$, respectively.

We can now define our Toeplitz-like operators. Let $\Om \in\Rat^{m\times  m}$ with possibly poles on $\BT$ and $\det\Om(z)\not \equiv 0$. Define
$T_\Om \left (H^p_m \rightarrow H^p_m \right ), 1 < p < \infty,$ by
\begin{equation}\label{TOm}
\begin{aligned}
\Dom(T_\Om)=\left\{\begin{array}{ll}
f\in H^p_m :&
\begin{array}{l}
\Om f = h + r \textrm{ where } h\in L^p_m(\BT),\\
\ands r \in\Rat_0^{m}(\BT) \\
\end{array}\\
\end{array}
\right \}, \\
T_\Om f = \BP h \textrm{ with } \BP \textrm{ the Riesz projection of } L^p_m(\BT) \textrm{ onto }H^p_m.
\end{aligned}
\end{equation}
By the Riesz projection, $\BP$, we mean the projection of $L^p_m$ onto $H^p_m$ due to M. Riesz, see for example pages 149 - 153 in \cite{H62}, in contrast to the Riesz projection in spectral operator theory, due to F. Riesz, see for example pages 9 - 13 in \cite{GGK92a}. For simplicity we will consider the square case only, but many of the results in this chapter extend to the non-square case, i.e., $m \not = n$.

The aim of this paper is the determination of Fredholm properties of the Toeplitz-like operator $T_\Om$ defined above. For the scalar case, the Fredholm-properties of Toeplitz-like operators were studied in \cite{GtHJR1}. For the classical case, i.e., without poles on the unit circle, the Fredholm-properties appear in Chapters XXIII and XXIV of \cite{GGK2}. In that case, the block Toeplitz operator is Fredholm exactly when the determinant of the symbol has no zeroes on $\BT$ (Theorem XXIII.4.3 in \cite{GGK2}). As we will see later, cf. Section \ref{S:FredholmM}, this is not the case when poles on the unit circle are allowed, due to possible pole-zero cancellation.

An essential ingredient in the analysis of Fredholm properties of Toeplitz operators with matrix symbols is played by Wiener-Hopf factorization; cf.,  Theorem XXIV.3.1 of \cite{GGK2}. This allows one to determine invertibility conditions and Fredholm-properties of the block Toeplitz operator; cf., Theorem XXIV.4.1 and Theorem XXIV.4.2 of \cite{GGK2}. Here too we base our analysis on a Wiener-Hopf-type factorization.\smallskip

\paragraph{\bf Main Results} In our first main result, using an adaptation of the construction in \cite{CG} we prove a Wiener-Hopf type factorization for a rational matrix function with poles on the unit circle. We call an $m\times m$ rational matrix function a plus function if it has no poles in the closed unit disk $\ov{\BD}$ and we call it a minus function if it has no poles outside the open unit disk $\BD$, with the point at infinity included.

\begin{theorem}\label{T:fact}
Let $\Om\in\Rat^{m\times m}$ with $\det \Om \not\equiv 0$. Then
\begin{equation}\label{WHfact1}
\Om(z) = z^{-k}\Om_-(z) \Om_\circ(z) P_0(z) \Om_+(z),
\end{equation}
for some $k\geq0$, $\Om_+,\Om_\circ,\Om_-\in\Rat^{m\times m}$ and $P_0\in\cP^{m\times m}$ such that $\Om_-$ and $(\Om_-)^{-1}$ are minus functions, $\Om_+$  and $(\Om_+)^{-1}$ are plus functions, $\Om_\circ = \diag_{j=1}^m (\phi_j)$ with each $\phi_j\in\Rat(\BT)$ having zeroes and poles only on $\BT$, and $P_0$ is a lower triangular matrix polynomial with $\det (P_0(z)) = z^N$ for some integer $N \geq 0$.
\end{theorem}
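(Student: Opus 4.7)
The plan is to adapt the Clancey--Gohberg construction cited as \cite{CG}, obtaining the factorization in three stages: (i) a preliminary separation that isolates the off-$\BT$ poles of $\Om$ from those on $\BT$, (ii) a local Smith--McMillan style analysis at each point of $\BT$ where $\Om$ has a pole or a zero, designed to assemble a single diagonal factor recording the scalar local orders on $\BT$, and (iii) the classical rational Wiener--Hopf factorization (Theorem XXIV.3.1 of \cite{GGK2}) applied to the residual factor, which by construction has neither poles nor zeroes on $\BT$. The four factors on the right of \eqref{WHfact1} correspond to these stages: $\Om_\pm$ absorb the off-$\BT$ behaviour and the one-sided analytic continuations, $\Om_\circ$ records the scalar local orders on $\BT$, and $z^{-k}P_0$ records the partial indices surviving at $z=0$.

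Concretely, I would begin by writing $\Om(z) = q(z)^{-1} R(z)$ with $q$ a scalar polynomial and $R\in\cP^{m\times m}$, and factor $q = q_-\,q_0\,q_+$ according to whether the roots of $q$ lie in $|z|>1$, on $\BT$, or in $|z|<1$; the scalar plus and minus factors $q_\pm^{-1}$ will be absorbed into $\Om_\pm$ at the end. For the matrix polynomial $R$, at each $z_0\in\BT$ that is a root of $q_0$ or of $\det R$, I would use a local Smith form $R(z) = E_{z_0}(z)\,\diag((z-z_0)^{\mu_j^{z_0}})\,F_{z_0}(z)$ valid near $z_0$, and then apply an iterative sequence of row and column reductions (chosen plus-invertible on one side and minus-invertible on the other) to extract the local scalar exponents into a diagonal factor. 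Assembling these reductions across the finitely many relevant points of $\BT$ and combining with $q_0^{-1}$ yields a factorization $\Om = L(z)\,\Om_\circ(z)\,M(z)$ in which $\Om_\circ = \diag_{j=1}^m(\phi_j)$ has the stated form and $L,M\in\Rat^{m\times m}$ have no poles or zeroes on $\BT$.

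In the final stage I would apply the classical rational Wiener--Hopf factorization to $L$ and $M$ separately and then, following the CG rearrangement, collect the outer plus/minus factors with $\Om_\circ$ and push the remaining integer-power parts into a single middle factor; when the inner plus part of $L$ and the inner minus part of $M$ are commuted past the diagonal $\Om_\circ$, this produces a lower triangular polynomial correction concentrated at $z=0$. After extracting a common $z^{-k}$ with $k\ge 0$, what remains between $\Om_\circ$ and $\Om_+$ is exactly a lower triangular matrix polynomial $P_0$ with $\det P_0(z) = z^N$ for $N = \sum_j(\kappa_j+k)\ge 0$, where the $\kappa_j$ are the partial indices produced by the classical factorization.

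I expect the middle step to be the main obstacle: the local row and column operations at the different points $z_0\in\BT$ must be patched into a single global factorization whose left and right outer factors remain in the plus and minus classes, and the subsequent interaction with the partial-index structure at $z=0$ is what forces $P_0$ to be only lower triangular rather than diagonal. The careful bookkeeping of plus/minus invertibility through this iteration is precisely the content of the CG construction being adapted, and verifying that the scalar entries $\phi_j$ and the lower triangular $P_0$ can be realized simultaneously while respecting the left/right plus/minus structure is the delicate part of the argument.
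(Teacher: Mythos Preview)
Your plan diverges from the paper's construction in two essential respects, and in each case the paper's choice is what makes the argument close.

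First, the paper never works locally at individual points of $\BT$. Instead it writes $\Om=q^{-1}P_1$ and applies one \emph{global} Smith decomposition $P_1=E_1 D_1^- D_1^\circ D_1^+ F_1$, immediately isolating $D_1^+F_1$ as a plus factor. It then takes the remaining polynomial $E_1D_1^-D_1^\circ$ (all zeroes in $\overline{\BD}$), applies the substitution $z\mapsto 1/z$ scaled by $z^N$ to get a new polynomial $P_2$, and applies a second global Smith decomposition to $P_2$. This $z\mapsto 1/z$ device converts roots in $\BD\setminus\{0\}$ into roots outside $\overline{\BD}$, so that after undoing the substitution the corresponding factor becomes a minus function with minus inverse. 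Your local-patching step is not needed; the global Smith form already produces a single diagonal $\Om_\circ$ in one stroke, and the delicate issue you anticipate --- reconciling local reductions at different $z_0\in\BT$ without spoiling the plus/minus structure --- simply does not arise.

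Second, and more seriously, your final stage is where the real gap lies. You assert that after writing $\Om=L\,\Om_\circ\,M$ and applying classical Wiener--Hopf to $L$ and $M$, commuting the inner plus/minus pieces past $\Om_\circ$ ``produces a lower triangular polynomial correction concentrated at $z=0$''. This is not automatic: a non-diagonal plus factor does not commute past a diagonal $\Om_\circ$ to yield anything lower triangular, and there is no evident mechanism forcing the residue to concentrate at $z=0$. The paper handles this with a dedicated tool, Lemma~\ref{L:fact_3}: given a matrix polynomial $F$ with $\det F(z)=z^n p(z)$ and $p(0)\neq 0$, one can factor $F=QR$ with $Q$ lower triangular, $\det Q=z^n$, and $\det R=p$. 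After the second Smith step the paper arrives at a polynomial $P_3(z)=z^K\wtil D_2^-(1/z)D_2^-(1/z)F_2(1/z)$ whose determinant is $cz^n$, and Lemma~\ref{L:fact_3} is exactly what extracts the lower triangular $P_0=Q_3$ from it. The paper even remarks that the original CG argument contained an oversight at the analogous point, repaired by this lemma. Without an explicit substitute for Lemma~\ref{L:fact_3}, your plan does not produce $P_0$.
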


Note that, the diagonal entries of $P_0$ must be of the form $z^{n_1},\ldots,z^{n_m}$. However, since $P_0$ is assumed to be lower triangular and not diagonal, it is not possible to order them (increasing or decreasing in power of $z$) without disrupting the lower triangular structure. This is in sharp contrast to the classical Wiener-Hopf factorization result where the entries on the diagonal can be ordered to have increasing degrees and subject to this ordering become unique, see for example \cite{CG,GK58}.

In the case studied here, with poles on $\BT$ allowed, we do not find a very satisfactory uniqueness claim. As in the classical case, the plus function $\Omega_+$ and $\Omega_-$ are, in general, not unique. However, also the lower triangular matrix polynomial $P_0$ appears to be far from unique (as shown in Examples \ref{E:notUnique1} and \ref{E:notUnique2} below). Subject to the restriction of a special form of the factorization \eqref{WHfact1}, which can always be obtained, it is possible to prove that the factor $\Omega_\circ$ is unique.

\begin{theorem}\label{T:factUnique}
Let $\Om\in\Rat^{m\times m}$ with $\det \Om \not\equiv 0$. Let $q(z)$ be the least common multiple of all denominators of the entries of $\Omega$, and write $q(z)=q_-(z)q_\circ(z)q_+(z)$, where $q_+$ has zeroes only outside $\overline{\mathbb{D}}$, $q_-$ has zeroes only inside $\mathbb{D}$, and $q_\circ$ has zeroes only on $\mathbb{T}$. Then $\Om$ admits a factorization \eqref{WHfact1} as in Theorem \ref{T:fact} with the additional properties that
\begin{equation}\label{P+DcircP-}
\begin{aligned}
&P_+(z):=q_+(z)\Om_+(z),\quad D_\circ(z):=q_\circ(z)\Om_\circ(z),\\
&\quad\qquad P_-(\mbox{$\frac{1}{z}$}):=z^{-\degr q_-} q_-(z)\Om_-(z),
\end{aligned}
\end{equation}
define matrix polynomials $P_+$ and $P_-$ with no roots inside $\overline{\BD}$, $D_\circ$ is a diagonal polynomial, $D_\circ(z)=\diag_{j=1}^m(p^\circ_j(z))$ with $p^\circ_{j+1}$ dividing $p^\circ_{j}$ for $j=1,\ldots,m-1$, $k$ is the smallest number so that $P_0$ is still a polynomial, i.e., if $k>0$ then $P_0(0)\neq 0$, and the off-diagonal entries in $P_0$ have a lower degree than the diagonal entry in the same row. Among all factorizations \eqref{WHfact1} of $\Om$ satisfying these additional conditions, $\Om_\circ$ is uniquely determined.
\end{theorem}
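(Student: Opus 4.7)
The plan is in two parts: (i) starting from Theorem~\ref{T:fact}, constructing a factorization with the listed additional properties; (ii) showing that $\Om_\circ$ in any such factorization is uniquely determined (up to an obvious diagonal scalar ambiguity, pinned down e.g.\ by requiring the $p^\circ_j$ to be monic) via a local invariant-factor argument at the points of $\BT$.

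For existence, start from any factorization \eqref{WHfact1} furnished by Theorem~\ref{T:fact}. The claim that $P_+(z)=q_+(z)\Om_+(z)$ is a matrix polynomial with no roots in $\ov{\BD}$ is automatic: since $z^{-k}$, $\Om_-$, $\Om_\circ$, and $P_0$ all have finite, invertible values on $\BC\setminus\ov{\BD}$, the poles of $\Om$ outside $\ov{\BD}$ (with multiplicities) coincide with those of $\Om_+$; hence $q_+$, which by definition clears these poles, makes $P_+$ polynomial, and since $\Om_+,\Om_+^{-1}$ have no poles in $\ov{\BD}$ while $q_+$ has no zeros there, $P_+$ is invertible on $\ov{\BD}$. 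The analogous statement for $P_-$ follows by applying the same argument to $\Om(1/z)$ and tracking the reciprocal substitution. To obtain $P_0(0)\neq 0$, note that $P_0(0)=0$ as a matrix is equivalent to every entry being divisible by $z$, allowing an extra factor $z$ to be moved into $z^{-k}$; iterating yields the minimal $k$. The degree reduction of the below-diagonal entries of $P_0$ modulo the diagonal entry of the same row is realized by right-multiplying $P_0$ by a lower-triangular matrix polynomial $U$ with unit diagonal (chosen by polynomial division), and absorbing $U^{-1}$ into $\Om_+$; since both $U$ and $U^{-1}$ are polynomial, they are plus functions with plus inverses, so the properties of $\Om_+$ and $P_+$ persist.

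The main obstacle is engineering the divisibility chain $p^\circ_{j+1}\mid p^\circ_j$ in $D_\circ=q_\circ\Om_\circ$. An ordinary Smith-normal-form reduction over $\BC[z]$ would introduce unimodular factors that could destroy the plus/minus structure of $\Om_\pm$. The plan is instead to refine the construction underlying Theorem~\ref{T:fact} (following \cite{CG}) so that, at each step, the diagonal entry of $\Om_\circ$ having the largest local orders at every relevant $\zeta\in\BT$ is extracted first; the only local rings involved are the finitely many discrete valuation rings $\mathcal{O}_\zeta$ for $\zeta\in\BT$ at which some $\phi_j$ has a zero or pole, so a simultaneous Smith-type reduction exists whose unimodular factors can be realized inside $\Om_\circ$ itself without disturbing $\Om_\pm$ or $P_0$.

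For uniqueness, fix any $\zeta\in\BT$ and work in the discrete valuation ring $\mathcal{O}_\zeta$. At $\zeta$, the factors $z^{-k}$, $\Om_\pm$, $\Om_\pm^{-1}$, and $P_0$ (whose determinant $z^N$ is a unit at $\zeta$) are all units, so the local Smith invariants of $\Om$ at $\zeta$ equal those of $\Om_\circ$. Because $\Om_\circ=\diag_{j=1}^m(\phi_j)$ is diagonal, these invariants are simply the multiset $\{\nu_\zeta(\phi_j)\}_{j=1}^m$, and the divisibility condition $p^\circ_{j+1}\mid p^\circ_j$ forces this multiset to be realized in the decreasing order $\nu_\zeta(\phi_1)\geq\cdots\geq\nu_\zeta(\phi_m)$. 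If $\Om_\circ'=\diag_{j=1}^m(\phi_j')$ comes from a second factorization satisfying the same conditions, the same argument yields $\nu_\zeta(\phi_j)=\nu_\zeta(\phi_j')$ for every $\zeta\in\BT$ and every $j$; since $\phi_j,\phi_j'\in\Rat(\BT)$ have zeros and poles only on $\BT$, this equality of valuations forces $\phi_j'=c_j\phi_j$ for some nonzero constants $c_j$, which is the asserted uniqueness of $\Om_\circ$.
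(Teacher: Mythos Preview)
Your uniqueness argument is correct and is essentially the paper's argument phrased more directly. The paper packages the local-invariant reasoning into its regional Smith form (Theorem~\ref{T:SmithReg}): after clearing denominators to reduce to a polynomial identity, it observes that both $D_\circ$ and $\wtil{D}_\circ$ must equal the Smith form with respect to $\BT$ of one and the same matrix polynomial. Your pointwise valuation argument at each $\zeta\in\BT$ is exactly this, only without the preliminary reduction to polynomials; the divisibility condition forces the valuations into decreasing order, so they agree with the intrinsic local invariants of $\Om$.

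For existence your route differs from the paper's. The paper simply inspects the explicit construction in the proof of Theorem~\ref{T:fact} and checks that each additional property is already delivered there (in particular, $D_\circ=\wtil{D}_2^\circ$ inherits the divisibility ordering from the Smith decomposition of $P_2$, and the degree constraint on $P_0$ comes from Lemma~\ref{L:fact_3}). You instead start from an \emph{arbitrary} factorization \eqref{WHfact1} and modify it. Your observation that $P_+=q_+\Om_+$ and $P_-$ are automatically matrix polynomials with no roots in $\ov{\BD}$, for \emph{any} factorization as in Theorem~\ref{T:fact}, is correct and is a pleasant simplification over the paper's construction-specific check; the $P_0$ degree reduction by right-multiplication by a unit lower-triangular polynomial (absorbed into $\Om_+$) and the adjustment of $k$ are also fine.

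The one genuine gap is the divisibility chain $p^\circ_{j+1}\mid p^\circ_j$. Starting from an arbitrary $\Om_\circ=\diag(\phi_j)$, there is no evident way to redistribute the local orders at the various $\zeta\in\BT$ into the required chain using only operations that can be absorbed into $\Om_\pm$ and $P_0$ without destroying their structure; your paragraph here is a plan (``refine the construction\ldots extract the largest local orders first'') rather than an argument, and the hoped-for ``simultaneous Smith-type reduction \ldots inside $\Om_\circ$ itself'' is not supplied. In fact no refinement is needed: the Smith decomposition used in the proof of Theorem~\ref{T:fact} already produces the diagonal factor with the divisibility ordering, and this carries over to $D_\circ$ --- which is exactly how the paper dispatches this point. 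So the fix is to drop the modification approach for this one property and cite the construction.
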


In fact, as observed in Corollary \ref{C:SFP1} below, the diagonal matrix $D_\circ$ correspond to what we define in Section \ref{S:PolyFact} as the Smith form of the polynomial $P_1(z):=q(z)\Om(z)$ with respect to $\BT$, and can be directly determined by the classical Smith form of $P_1$.

In Example \ref{E:notUnique2} below, for the case of $2 \times 2$ matrix function, subject to an additional constraint it is possible to diagonalize the factor $\Omega_\circ(z)P_0(z)$ in \ref{WHfact1}. Whether it is possible to diagonalize $\Omega_\circ(z)P_0(z)$ in general remains unclear, we have proof nor counterexample, even for the $2 \times 2$ case. However, the arguments used in the proofs of the classical cases, without poles on $\BT$, do not appear to generalize easily to the case considered in the present paper.

As in the case of scalar rational functions, the Wiener-Hopf type factorization of Theorem \ref{T:fact} allows us to factorize the Toeplitz-like operator along the matrix function factorization.

\begin{theorem}\label{T:Fact_Toeplitz}
Let $\Om(z) = z^{-k}\Om_-(z) \Om_\circ(z) P_0(z) \Om_+(z)$ be a Wiener-Hopf type factorization of $\Om$ as in Theorem \ref{T:fact}.
Then
\[
T_\Om = T_{\Om_-} T_{z^{-k}}T_{\Om_\circ}T_{P_0} T_{\Om_+}.
\]
\end{theorem}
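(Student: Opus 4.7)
The plan is to establish the factorization by peeling off the outer factors iteratively, using two auxiliary lemmas: a \emph{plus lemma} handling $\Om_+$ and $P_0$ on the right, and a \emph{minus lemma} handling $\Om_-$ and $z^{-k}$ on the left.

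The plus lemma asserts that if $B\in\Rat^{m\times m}$ has no poles in $\ov{\BD}$ (in particular if $B$ is a matrix polynomial), then $T_B$ is bounded on $H^p_m$ and, for every $A\in\Rat^{m\times m}$ with $\det A\not\equiv 0$,
\[
T_{AB}=T_A T_B,\qquad \Dom(T_{AB})=\Dom(T_A T_B).
\]
Indeed, multiplication by $B$ maps $H^p_m$ into itself, so $T_B f=Bf$ has full domain; then $f\in\Dom(T_{AB})$ iff $A(Bf)=h+r$ with $h\in L^p_m$ and $r\in\Rat_0^m(\BT)$, iff $Bf\in\Dom(T_A)$. The values agree by uniqueness of the direct-sum decomposition $L^p_m\dotplus\Rat_0^m(\BT)$ (directness holds because any nonzero element of $\Rat_0^m(\BT)$ has a pole on $\BT$ and so cannot be in $L^p_m$).

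The minus lemma asserts that if $A\in\Rat^{m\times m}$ is a minus function whose inverse has no poles on $\BT$ (automatic for $A=\Om_-$, since $\Om_-^{-1}$ is a minus function by Theorem \ref{T:fact}, and for $A=z^{-k}$, since $z^k$ is a polynomial), then $T_A$ is bounded on $H^p_m$ and
\[
T_{AB}=T_A T_B,\qquad \Dom(T_{AB})=\Dom(T_B)=\Dom(T_A T_B),
\]
for every $B\in\Rat^{m\times m}$ with $\det B\not\equiv 0$. For the domain equality, if $Bf=h_2+r_2$ then $ABf=Ah_2+Ar_2$; since $A$ contributes no poles on $\BT$, a partial-fraction decomposition splits $Ar_2$ into a piece in $\Rat_0^m(\BT)$ (its $\BT$-principal parts) plus a rational remainder whose poles lie off $\BT$, hence bounded on $\BT$ and in $L^p_m$. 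The reverse inclusion runs symmetrically via $A^{-1}$; when $A^{-1}=z^k$ is a polynomial, the polynomial part arising is also bounded on $\BT$. For the operator identity one writes $h_2=\BP h_2+(I-\BP)h_2$ and uses that $A$ has only nonpositive Fourier coefficients on $\BT$ (being rational and analytic on $\{|z|\geq 1\}\cup\{\infty\}$), so $A(I-\BP)L^p_m\subseteq(I-\BP)L^p_m$ and hence $\BP(Ah_2)=\BP(A\BP h_2)$; moreover the off-$\BT$ remainder $u$ of $Ar_2$ is itself a minus function vanishing at infinity, so $\BP u=0$. Adding these yields $T_{AB}f=\BP(Ah_2+u)=\BP(A\BP h_2)=T_A T_B f$.

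Given the two lemmas the proof is mechanical. Applying the plus lemma twice, first with $B=\Om_+$ and then with $B=P_0$, gives
\[
T_\Om = T_{z^{-k}\Om_-\Om_\circ P_0}T_{\Om_+}=T_{z^{-k}\Om_-\Om_\circ}T_{P_0}T_{\Om_+}.
\]
Using scalar commutativity $z^{-k}\Om_-=\Om_-z^{-k}$ and applying the minus lemma first with $A=\Om_-$ (and $B=z^{-k}\Om_\circ$) then with $A=z^{-k}$ (and $B=\Om_\circ$) yields
\[
T_{z^{-k}\Om_-\Om_\circ}=T_{\Om_-}T_{z^{-k}\Om_\circ}=T_{\Om_-}T_{z^{-k}}T_{\Om_\circ},
\]
which combines to the claimed five-factor identity. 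The main obstacle is the domain bookkeeping in the minus lemma — confirming that multiplication by $A^{\pm 1}$ preserves the direct sum $L^p_m\dotplus\Rat_0^m(\BT)$ without pole-mixing anomalies at $\BT$ or at infinity. The essential input is that both $A$ and $A^{-1}$ are rational with no poles on $\BT$, which keeps the partial-fraction analysis local and clean.
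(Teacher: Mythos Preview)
Your proof is correct and follows the same overall strategy as the paper: peel off the plus factors $\Om_+$ and $P_0$ on the right, then the minus factors $\Om_-$ and $z^{-k}$ on the left, checking domain equalities and using that multiplication by a minus function preserves $(I-\BP)L^p_m$. The paper proves exactly your plus lemma and the $U$-half of your minus lemma as its Lemma~6.1, and then handles $z^{-k}$ separately in Lemma~6.2 (stated only for $\Om\in\Rat^{m\times m}(\BT)$, which suffices since it is applied to $\Om_\circ$). Your single minus lemma, with the weaker hypothesis that $A^{-1}$ merely have no poles on $\BT$, covers both cases at once; this is a mild but clean unification, the point being that the reverse domain inclusion only needs $A^{-1}$ bounded on $\BT$, not that $A^{-1}$ be a minus function. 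Structurally and in all essential details the two arguments coincide.
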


Our next, and final, main result uses the factorization of $T_\Om$ to characterize the Fredholmness of $T_\Om$ and determine the Fredholm index of $T_\Om$ in terms of properties of the functions of the Wiener-Hopf type factorization of $\Om$ in case $T_\Om$ is Fredholm.

\begin{theorem}\label{T:Fredholm}
Let $\Om(z) = z^{-k}\Om_-(z) \Om_\circ(z) P_0(z) \Om_+(z)$ be the Wiener-Hopf type factorization of $\Om$ as in Theorem \ref{T:fact} with $\Om_\circ(z)=\diag_{j=1}^n (s_j(z)/q_j(z))$ with $s_j,q_j\in\cP$ co-prime with roots only on $\BT$ and $z^{n_j}$ the $j$-th diagonal entry of $P_0$, for $j=1,\ldots, m$. Then $T_\Om$ is Fredholm if and only if $T_{\Om_\circ}$ is Fredholm, which happens exactly when all $s_1,\ldots,s_m$ are constant. In case $T_\Om$ is Fredholm we have
\begin{align*}
\Index (T_\Om) & = mk + \Index(T_{\Om_\circ}) + \Index(T_{P_0})\\
& = mk + \sum_{j=1}^m \degr q_j  - \sum_{j=1}^m n_j.
\end{align*}
\end{theorem}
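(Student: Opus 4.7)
The plan is to apply Theorem~\ref{T:Fact_Toeplitz} and analyze each of the five factors in
$T_\Om = T_{\Om_-}T_{z^{-k}I_m}T_{\Om_\circ}T_{P_0}T_{\Om_+}$
separately, combining their Fredholm indices via additivity of the index over compositions. The strategy treats the two outer factors as bounded invertibles that do not affect Fredholm properties, the factors $T_{z^{-k}I_m}$ and $T_{P_0}$ as bounded Fredholm operators whose indices follow from elementary/classical arguments, and the potentially unbounded diagonal middle factor $T_{\Om_\circ}$ as the one carrying all the substantive Fredholm information, via the scalar theory of \cite{GtHJR1}.

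First, since $\Om_+,\Om_+^{-1}$ are rational with no poles in $\ov{\BD}$ and $\Om_-,\Om_-^{-1}$ are rational with no poles outside $\BD$ (including at $\infty$), both $T_{\Om_\pm}$ are bounded and invertible on $H^p_m$ with $T_{\Om_\pm}^{-1}=T_{\Om_\pm^{-1}}$ by classical analytic (respectively co-analytic) Toeplitz theory. Next, $T_{z^{-k}I_m}$ is the $k$-fold backward shift acting coordinatewise on $H^p_m$; its kernel consists of the vector polynomials of degree $<k$ (dimension $mk$) and its range is all of $H^p_m$, so it is Fredholm with $\Index T_{z^{-k}I_m}=mk$. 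For $T_{P_0}$, the symbol is a matrix polynomial with $\det P_0(z)=z^N$ where $N=\sum_j n_j$, so it has no zero on $\BT$ and its determinant has winding number $N$ about $0$; by classical block Toeplitz theory (Theorems~XXIII.4.3 and~XXIV.4.2 of \cite{GGK2}), $T_{P_0}$ is Fredholm with $\Index T_{P_0}=-N$. Finally, because $\Om_\circ=\diag_{j=1}^m(s_j/q_j)$ is diagonal, $T_{\Om_\circ}$ splits as $\bigoplus_{j=1}^m T_{s_j/q_j}$ on $H^p_m\simeq (H^p)^m$, and the scalar results of \cite{GtHJR1} tell us that $T_{s_j/q_j}$ is Fredholm iff the numerator $s_j$ has no zero on $\BT$. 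Since by hypothesis the roots of $s_j$ lie only on $\BT$, this is equivalent to $s_j$ being a nonzero constant, and in that case $\Index T_{s_j/q_j}=\degr q_j$, so that $\Index T_{\Om_\circ}=\sum_j \degr q_j$.

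Combining these ingredients, when every $s_j$ is constant the product $T_\Om$ is Fredholm by composition, with
\[
\Index T_\Om \;=\; 0 + mk + \sum_{j=1}^m \degr q_j + \Bigl(-\sum_{j=1}^m n_j\Bigr) + 0,
\]
yielding the claimed formula. The delicate point, which I expect to be the main obstacle, is the converse implication: showing that Fredholmness of $T_\Om$ actually \emph{forces} every $s_j$ to be constant. Since $T_{\Om_\pm}$ are bounded invertible this reduces to excluding the possibility that $T_{z^{-k}I_m}T_{\Om_\circ}T_{P_0}$ is Fredholm while some $T_{s_j/q_j}$ is not. I would attack this by observing that $z^{-k}\Om_\circ(z) P_0(z)$ is again lower triangular (diagonal times lower triangular) with diagonal entries $s_j(z)\,z^{n_j-k}/q_j(z)$, and arguing that a lower-triangular Toeplitz-like operator of this kind is Fredholm only if each diagonal scalar Toeplitz-like operator is, the strictly lower-triangular part being a ``nilpotent'' perturbation in the block-triangular sense that cannot restore Fredholmness. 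Care is required because $T_{\Om_\circ}$ is unbounded: the composition arguments, Fredholm parametrices, and the passage from the product back to the diagonal summands have to be phrased at the level of the domains compatible with the definition in \eqref{TOm}, rather than by formal operator algebra on all of $H^p_m$.
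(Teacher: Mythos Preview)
Your approach is essentially the paper's: factor via Theorem~\ref{T:Fact_Toeplitz}, peel off the bounded invertible outer factors $T_{\Om_\pm}$, treat $T_{z^{-k}I_m}$ and $T_{P_0}$ as bounded Fredholm operators, and reduce the diagonal piece $T_{\Om_\circ}$ to the scalar theory of \cite{GtHJR1} via Proposition~\ref{P:diagonal}. The index computations match exactly.

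The one genuine divergence is the converse direction. You propose to argue via the lower-triangular structure of $z^{-k}\Om_\circ(z)P_0(z)$ that Fredholmness of the composite forces each diagonal scalar operator to be Fredholm, treating the strictly lower part as a nilpotent perturbation. This is plausible but, as you yourself flag, delicate because of the unbounded domains. The paper avoids this entirely by invoking abstract Fredholm product theorems for closed operators: Theorem~IV.2.7 of Goldberg~\cite{G66} gives that $T_\Om$ is Fredholm iff $T_{z^{-k}I_m}T_{\Om_\circ}T_{P_0}$ is (since the outer factors are bounded invertible), and then Theorem~3.4 of Schechter~\cite{S67} is used to cancel the bounded Fredholm factor $T_{P_0}$ on the right, yielding directly that $T_{z^{-k}I_m}T_{\Om_\circ}$ is Fredholm. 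At that point the diagonal splitting applies and the scalar theory finishes. This is both shorter and avoids any bespoke block-triangular domain analysis. Your forward direction also implicitly needs such a product theorem (a closed Fredholm operator composed with bounded Fredholm operators stays Fredholm, with additive index); the paper is explicit about citing Goldberg here, and you should be too, since ordinary bounded-operator Fredholm algebra does not apply to $T_{\Om_\circ}$ without justification.
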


\paragraph{\bf Comparison with the literature.}
There are not many known results concerning unbounded (or closed) Toeplitz operators with matrix symbols. We use a factorization based on the Smith form \index{Smith form} for matrix  polynomials (Theorem 1 and Theorem 2 of Gantmacher, chapter VI \cite{G59}) which resembles the Wiener-Hopf factorization for rational matrix functions without poles on the unit circle appearing in Theorem 2.1 of \cite{CG}. The proof of Theorem 2.1 of \cite{CG} uses the Smith form as a starting point.

Factorization of matrix functions, however, has a long tradition. Wiener-Hopf type factorizations of matrix functions relative to a given contour (with certain properties, but not necessarily equal to $\BT$) on which they can have no poles, appeared, for example, in Gohberg and Krein \cite{GK58} and Clancey and Gohberg \cite{CG}; factorization of matrix functions as solutions to barrier problems in complex function theory appear as early as 1908 in Plemelj \cite{P08},  whereas factorization of rational matrix functions relative to the unit circle appears as spectral factorization in electrical engineering in Belevitch \cite{B68} and Youla \cite{Y61}. In all of these cases, however, there is the restriction that the (matrix) functions can have no poles on the contour.

The proof of the Wiener-Hopf type factorization for a rational matrix function with poles on the unit circle of Theorem \ref{T:fact} is based on the approach found in \cite{CG}. However, there is a slight oversight in the proof of Theorem 2.1 in Chapter 1 of \cite{CG}. An application of Lemma \ref{L:fact_3} below would eliminate this. It is not true that $LD(z)P(z) = D(z)LP(z)$ where $L$ is a lower triangular elementary matrix with ones on the main diagonal and only one row of nonzero entries off the main diagonal, $D(z) = \diag (z^{n_j})$ is a diagonal matrix with $n_1 \geq n_2 \geq \ldots \geq n_m$ and $P(z)$ a matrix polynomial with $\det P$ having a zero only at $z=0$. Applying Lemma \ref{L:fact_3} we can write $D(z)LP(z)=G(z)D(z)P(z)$ where $G(z)$ is a lower triangular minus matrix function with ones on the main diagonal. The result of Theorem 2.1 in Chapter 1 of \cite{CG} follows using this adaptation.


There is some divergence from the situation where $\Om$ has no poles on $\BT$. In that case, from Theorem XXIV.4.3 in \cite{GGK2} we have that $T_\Om$ is Fredholm if and only if $\det \Om (z) \not = 0$ for $z\in\BT$. When poles on $\BT$ are allowed  there could be pole-zero cancellation in the determinant of $\Om(z)$ for $z\in\BT$, for example $\Om (z) = \diag (\frac{z+1}{z-1}, \frac{z-1}{z+1})$ has $\det \Om(z) \equiv 1$. However, $T_\Om$ is not Fredholm. Thus, there are cases where $\det \Om (z) \not = 0$ for $z\in\BT$ but $T_\Om$ is not Fredholm, which does not happen in the case where $\Om$ has no poles on $\BT$.\smallskip

\paragraph{\bf Overview.}
The paper is organized as follows:\ Besides the current introduction, the paper consists of six sections. In Section \ref{S:BasicM} we prove basic results concerning the Toeplitz-like operator $T_\Om$. In the following section, Section \ref{S:PolyFact}, we derive various factorization results required for the proof of the Wiener-Hopf type factorization of Theorem \ref{T:fact}. The proofs of Theorems \ref{T:fact} and \ref{T:factUnique} will be given in Section \ref{S:FactM}, followed by an example that illustrates the construction of the Wiener-Hopf type factorization in Section \ref{S:ExampleM}. In the next section, we prove the factorization of the Toeplitz-like operator, i.e., Theorem \ref{T:Fact_Toeplitz}. Finally, in Section \ref{S:FredholmM} we consider the Fredholm properties of $T_\Om$, including a proof of Theorem \ref{T:Fredholm}, and we present some examples that exhibit the non-uniqueness in our Wiener-Hopf type factorization.

\section{Basic properties of $T_\Om$}\label{S:BasicM}

Using similar arguments as in the scalar case, cf., \cite{GtHJR1}, we determine various basic properties of the Toeplitz-like operator $T_\Om$. Some of these results can be derived by restricting to the entries of $\Om$, in which case we give minimal details of the proof. We start with an analogue of Proposition 2.1 of \cite{GtHJR1}.


\begin{proposition}\label{P:basic1}
Let $\Om\in\Rat^{m\times m}$, possibly with poles on $\BT$. Then $T_\Om$ is a well-defined, closed, densely defined linear operator on $H_m^p$.  More specifically, $\cP^m \subset \Dom (T_\Om)$. Moreover, $\Dom (T_\Om)$ is invariant under the forward shift $S_+ = T_{zI_m}$ on $H_m^p$ and we have
\begin{equation}\label{Eq:invar}
S_- T_\Om S_+ f = T_\Om f \qquad \textrm{for all } f\in\Dom (T_\Om),
\end{equation}
 where $S_- = T_{z^{-1}I_m}$ on $H_m^p$.
\end{proposition}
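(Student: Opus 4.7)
My plan is to follow the scalar template of Proposition~2.1 in \cite{GtHJR1}, transferring each step to the matrix setting.

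Well-definedness comes from the observation that $L^p_m \cap \Rat_0^m(\BT) = \{0\}$, since a nonzero element of $\Rat_0^m(\BT)$ has a non-integrable pole on $\BT$; hence the decomposition $\Om f = h + r$ in \eqref{TOm} is unique and $T_\Om$ is well-defined and linear. For $\cP^m \subset \Dom(T_\Om)$, given $f \in \cP^m$ the product $\Om f \in \Rat^m$ admits a partial-fraction splitting $\Om f = h + r$ with $r$ gathering the principal parts at poles on $\BT$ (so $r \in \Rat_0^m(\BT)$) and $h$ rational with no poles on $\BT$ (hence in $L^\infty_m \subset L^p_m$). Density of $\Dom(T_\Om)$ in $H^p_m$ then follows from density of $\cP^m$ in $H^p_m$. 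For the intertwining identity \eqref{Eq:invar}, if $\Om f = h + r$ then $\Om(S_+ f) = z h + z r$; since $r$ is strictly proper, $zr = c + \wtil{r}$ with $c := \lim_{z\to\infty} z r(z) \in \BC^m$ and $\wtil{r} \in \Rat_0^m(\BT)$, so $S_+ f \in \Dom(T_\Om)$. The Fourier-side identity $\BP(zh) = \what{h}(-1) + z\, \BP h$ combined with $\BP c = c$ gives $T_\Om S_+ f = \what{h}(-1) + c + S_+ T_\Om f$. Applying $S_-$ kills both constant terms (any $\BC^m$-valued constant is mapped to $0$ by $S_-$) and $S_- S_+ = I$ on $H^p_m$ leaves $T_\Om f$, which is \eqref{Eq:invar}.

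The closedness claim is where the main work sits. Suppose $f_n \to f$ in $H^p_m$ and $T_\Om f_n \to g$ in $H^p_m$, with $\Om f_n = h_n + r_n$ the associated decompositions. My idea is to observe that every admissible remainder $r$ in such a decomposition lies in a fixed finite-dimensional subspace $V_\Om \subset \Rat_0^m(\BT)$: the poles of $r$ are constrained to be among the $\BT$-poles of $\Om$, and the orders are bounded by those of $\Om$. Choose a scalar polynomial $q$ with zeros precisely at the $\BT$-poles of $\Om$ with matching multiplicities, so that $q \Om$ has no poles on $\BT$ (hence $H^p_m \ni \phi \mapsto q \Om \phi \in L^p_m$ is bounded) and $q V_\Om$ is a finite-dimensional subspace $W \subset \cP^m$. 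Multiplying by $q$ yields $q \Om f_n = q h_n + q r_n$, with $q \Om f_n \to q \Om f$ in $L^p_m$ and $q r_n \in W$. A compactness argument on the finite-dimensional space $W$, combined with the uniqueness of the decomposition from the first step, would then produce $q r_n \to q r_\infty \in W$ along a subsequence, hence $r_n \to r_\infty \in V_\Om$ and $h_n \to h_\infty := \Om f - r_\infty$ in $L^p_m$, giving $f \in \Dom(T_\Om)$ and $T_\Om f = \BP h_\infty = g$.

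The main obstacle will be controlling $\{q r_n\}$ inside $W$: the given $H^p_m$-convergence of $f_n$ and $T_\Om f_n = \BP h_n$ does not by itself control the negative Fourier coefficients of $h_n$ nor the magnitudes of $r_n$, so some explicit bookkeeping of the pole structure of $\Om$ (along the lines of the scalar case in \cite{GtHJR1}) is needed. The essential new feature in the matrix setting is that one must treat $\Om f_n$ as a whole rather than entry by entry, in order to accommodate possible cancellations across the columns of $\Om$.
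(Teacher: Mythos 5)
The parts of your proposal on well-definedness (via $L^p_m\cap\Rat_0^m(\BT)=\{0\}$), on $\cP^m\subset\Dom(T_\Om)$ and density, and on the identity \eqref{Eq:invar} are correct and essentially the paper's argument; the paper does the last computation with the projection identity $\BP z^{-1}(I_{L^p_m}-\BP)zh=0$ rather than Fourier coefficients, which amounts to the same thing.

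The genuine gap is in the closedness argument, exactly where you flag it, and the obstacle is not merely technical. In your setup the only information on $h_n$ is $\BP h_n=T_\Om f_n\to g$, which says nothing about $(I-\BP)h_n$, and there is no a priori bound on $q r_n$ in $W$, so the compactness step cannot start. Moreover, even if $qr_n$ converged along a subsequence, you could not conclude that $h_\infty:=\Om f-r_\infty$ lies in $L^p_m$, nor that $h_n\to h_\infty$ in $L^p_m$, because division by $q$ (which vanishes on $\BT$) is unbounded; also the claim that all admissible remainders lie in a fixed finite-dimensional $V_\Om$ itself needs the analogue of Lemmas 2.2--2.3 of \cite{GtHJR1}. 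The missing idea, which is how the paper proceeds, is a normalization that makes the ``good part'' analytic before attempting closedness: by Lemma \ref{L:SumDec} write $\Om=\Om_1+\Om_2$ with $\Om_1$ pole-free on $\BT$, so $\Om_1\in L^\infty_{m\times m}$, $T_{\Om_1}$ is bounded, $\Dom(T_\Om)=\Dom(T_{\Om_2})$ and $T_\Om=T_{\Om_1}+T_{\Om_2}$; hence closedness need only be proved for $\Om_2\in\Rat_0^{m\times m}(\BT)$. Writing $\Om_2=q^{-1}P$ with $q$ having roots only on $\BT$, the matrix analogue of Lemma 2.3 of \cite{GtHJR1} gives: $f\in\Dom(T_{\Om_2})$ if and only if $\Om_2 f=h+q^{-1}r$ with $h\in H^p_m$ and $r\in\cP^m_{\degr q-1}$, both unique, and then $T_{\Om_2}f=h$. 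With this description, if $f_n\to f$ and $T_{\Om_2}f_n=h_n\to g$ in $H^p_m$, then $r_n=Pf_n-qh_n\to Pf-qg$ in $L^p_m$, since multiplication by the polynomials $P$ and $q$ is bounded; the finite-dimensional space $\cP^m_{\degr q-1}$ is closed, so the limit is a polynomial $r$ of the right degree, and $\Om_2 f=g+q^{-1}r$ yields $f\in\Dom(T_{\Om_2})$ and $T_{\Om_2}f=g$. No compactness, norm estimate, or subsequence extraction is needed: the analyticity of $h$, which makes $h_n$ precisely the datum that converges by hypothesis, is what eliminates the negative-Fourier-coefficient problem you identified.
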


We first give two lemmas, without proof, which can be derived in a way analogous to the scalar case, starting with the analogue of Lemma 2.4 in \cite{GtHJR1}.

\begin{lemma}\label{L:SumDec}
Given $\Om\in\Rat^{m\times m}$, we can write $\Om(z)=\Om_1(z)+\Om_2(z)$, where $\Om_1 \in \Rat^{m\times m}$ with no poles on $\BT$ and $\Om_2\in\Rat_0^{m\times m}(\BT)$.
\end{lemma}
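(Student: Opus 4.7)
The plan is to reduce to the scalar version of this statement (Lemma 2.4 of \cite{GtHJR1}) by applying it entry-wise. For each pair $i,j \in \{1,\ldots,m\}$, the entry $\Om_{ij}(z)$ is a scalar rational function, so the scalar lemma produces a decomposition $\Om_{ij}(z) = \om_{ij,1}(z) + \om_{ij,2}(z)$ with $\om_{ij,1} \in \Rat$ having no poles on $\BT$ and $\om_{ij,2} \in \Rat_0(\BT)$. Setting $\Om_1 := [\om_{ij,1}]_{i,j=1}^m$ and $\Om_2 := [\om_{ij,2}]_{i,j=1}^m$ gives matrix rational functions with $\Om_1 + \Om_2 = \Om$.

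What remains is to check that $\Om_1 \in \Rat^{m\times m}$ has no poles on $\BT$ and that $\Om_2 \in \Rat_0^{m\times m}(\BT)$. By the convention recalled in the introduction, $\Om_1$ has a pole at $z_0$ iff some entry $\om_{ij,1}$ does, so by the scalar choice $\Om_1$ has no poles on $\BT$. Similarly, every entry of $\Om_2$ has its poles on $\BT$, hence so does $\Om_2$, and since each entry $\om_{ij,2}(z) \to 0$ as $z \to \infty$, the matrix $\Om_2(z)$ tends to the zero matrix at infinity. Thus $\Om_2 \in \Rat_0^{m\times m}(\BT)$.

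As an alternative route, one could avoid invoking the scalar lemma and argue directly: let $q_\circ(z)$ be the monic scalar polynomial whose roots are exactly the poles of $\Om$ on $\BT$, with each multiplicity equal to the largest pole order appearing among the entries of $\Om$ at that point. Then $q_\circ \Om$ is a matrix rational function without poles on $\BT$, and a partial fraction decomposition of $q_\circ \Om / q_\circ$ at each root of $q_\circ$ separates off the strictly proper part supported on $\BT$.

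There is no genuine obstacle in either route; the only care needed is the book-keeping to make sure the ``no poles on $\BT$'' condition for $\Om_1$ is verified in the matrix sense, which is immediate from the entry-wise condition since a matrix function's pole set is the union of the pole sets of its entries. This routine nature is presumably why the paper defers the proof.
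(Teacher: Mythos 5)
Your proposal is correct and matches the paper's intention: the paper states this lemma without proof, noting only that it ``can be derived in a way analogous to the scalar case'' (Lemma 2.4 of \cite{GtHJR1}), and your entry-wise application of that scalar decomposition, followed by the observation that the pole set of a matrix function is the union of the pole sets of its entries and that strict properness is checked entry-wise, is exactly that argument.
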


The above lemma allows one to reduce certain questions to the case where $\Om\in\Rat^{m\times m}(\BT)$. In that case, with arguments similar to the ones used in the proof of Lemma 2.3 in \cite{GtHJR1}, the domain can be described as in the next lemma.

\begin{lemma}
Let $\Om\in\Rat^{m\times m}(\BT)$. Write $\Om=q(z)^{-1}P(z)$ with $P\in\cP^{m\times m}$ and $q\in\cP$, $q$ having roots only on $\BT$. Then
\[
\Dom (T_\Om) = \left\{ g\in H_m^p \colon \Om g = h + q^{-1}r,\mbox{ with }h\in H_m^p,\, r\in\cP^m_{\degr(q) - 1} \right\},
\]
and $T_\om g =h$ for $g\in\Dom (T_\Om)$.
\end{lemma}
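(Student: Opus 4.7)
The plan is to adapt the scalar Lemma 2.3 of \cite{GtHJR1} to the matrix setting, carrying out the argument componentwise where convenient. The inclusion ``$\supseteq$'' is immediate: if $g \in H_m^p$ satisfies $\Om g = h + q^{-1} r$ with $h \in H_m^p$ and $r \in \cP^m_{\degr(q) - 1}$, then $h \in L_m^p$ automatically, and $q^{-1} r \in \Rat_0^m(\BT)$ (strictly proper with poles only on $\BT$, since $q$ has roots only on $\BT$ and $\degr r < \degr q$), so $g \in \Dom(T_\Om)$ by \eqref{TOm} and $T_\Om g = \BP h = h$ because $h$ is already in $H_m^p$.

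For ``$\subseteq$'', take $g \in \Dom(T_\Om)$ with $\Om g = h_0 + r'_0$, $h_0 \in L_m^p$, $r'_0 \in \Rat_0^m(\BT)$. I would first cast the rational part into the required form using a local regularity observation: a rational function that is locally $L^p$ near a point of $\BT$ must be regular there, because $(z-\eta)^{-m} \notin L^p(\BT)$ for any integer $m \ge 1$ and $p > 1$. Applied at any $\eta \in \BT$ with $q(\eta) \neq 0$, where $\Om$ is bounded and hence $\Om g \in L_m^p$ locally, this forces $r'_0 = \Om g - h_0$ to be in $L_m^p$ locally and therefore to have no pole at $\eta$. At a root $\zeta$ of $q$ of multiplicity $k$, write $q(z) = (z-\zeta)^k \tilde q(z)$ with $\tilde q(\zeta)\neq 0$; then $(z-\zeta)^k \Om g = \tilde q^{-1} P g \in L_m^p$ locally near $\zeta$, so $(z-\zeta)^k r'_0 \in L_m^p$ locally, and the same principle forces the pole order of $r'_0$ at $\zeta$ to be at most $k$. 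Hence $r'_0 = q^{-1} r$ for a polynomial vector $r$, and strict properness of $r'_0$ gives $r \in \cP^m_{\degr(q) - 1}$.

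It remains to upgrade $h_0 \in L_m^p$ to $h_0 \in H_m^p$. Substituting the decomposition yields $q h_0 = P g - r \in H_m^p$, reducing the task to the scalar auxiliary claim: if $\phi \in L^p$, $q \in \cP$ has roots only on $\BT$, and $q\phi \in H^p$, then $\phi \in H^p$. I would prove this by induction on $\degr q$: factor $q(z) = (z-\zeta) q_0(z)$ and set $\psi := q_0 \phi \in L^p$; the relation $(z-\zeta)\psi \in H^p$ yields the Fourier-coefficient recursion $\psi_{n-1} = \zeta \psi_n$ for $n \le -1$, so $|\psi_{-n}|$ is independent of $n \ge 1$, and since $\psi \in L^p \subset L^1(\BT)$ the Riemann--Lebesgue lemma drives this constant to zero; thus $\psi \in H^p$, and the inductive hypothesis applied to $q_0$ delivers $\phi \in H^p$. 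Applied componentwise this produces $h_0 \in H_m^p$, and $T_\Om g = \BP h_0 = h_0$ identifies $h$ in the lemma with $h_0$. The main obstacle is the local pole-order bookkeeping at roots of $q$; once that is in place, the Fourier recursion adapts the scalar case of \cite{GtHJR1} almost verbatim.
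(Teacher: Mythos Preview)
Your proof is correct and follows essentially the same route as the paper, which does not spell out a proof but simply refers to ``arguments similar to the ones used in the proof of Lemma 2.3 in \cite{GtHJR1}'' applied componentwise. Your two-step strategy---first bounding the pole orders of the rational part by a local $L^p$ regularity argument to obtain $r'_0 = q^{-1}r$ with $r\in\cP^m_{\degr(q)-1}$, then upgrading $h_0\in L_m^p$ to $h_0\in H_m^p$ via the Fourier-coefficient recursion and Riemann--Lebesgue---is exactly the scalar argument of \cite{GtHJR1} carried out entrywise, which is precisely what the paper intends.
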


\begin{proof}[\bf Sketch of the proof of Proposition \ref{P:basic1}]
The proof mostly follows by direct generalization of the arguments in \cite{GtHJR1} and \cite{GtHJR2}, sometimes reducing to results for $T_{\om_{ij}}$, where $\Om = [\om_{ij}]_{i,j = 1}^m$.

For $\rho\in\Rat_0^m(\BT)$, using a similar argument as in Lemma 2.2 in \cite{GtHJR1} on its entries, it follows that $\rho$ is identically zero whenever $\rho\in L_m^p$. Now following the argument in Proposition 2.1 in \cite{GtHJR1} one obtains that $T_\Om$ is well-defined.

Let $\Om\in\Rat^{m\times m}$. By Lemma \ref{L:SumDec}, we can write $\Om(z)=\Om_1(z)+\Om_2(z)$, where $\Om_1 \in \Rat^{m\times m}$ with no poles on $\BT$ and $\Om_2\in\Rat_0^{m\times m}(\BT)$. Then $T_\Om=T_{\Om_1}+T_{\Om_2}$ and the domains of $T_\Om$ and $T_{\Om_2}$ coincide.  To see this, note that $f\in \Dom(T_\Om)$ if and only if $f\in \Dom(T_{\Om_2})$ and that the latter is the case if and only if $\Om_2 f= h_2+\rho$ where $h_2\in L_m^p$ and $\rho \in\Rat_0^{m}(\BT)$. Now for such a function $f$ consider $\Om f=\Om_1 f+\Om_2 f$. Since $\Om_1\in  L_\infty^{m\times m}$, also $\Om_1 f\in L_m^p$. Moreover, we have
$$
\Om f=\Om_1 f+\Om_2 f =(\Om_1 f+ h_2) +\rho = h+\rho,
$$
where $h=\Om_1 f+ h_2$. Now
$$
\BP h=\BP (\Om_1 f)+\BP h =T_{\Om_1}f+T_{\Om_2} f=
T_\Om f
$$
as desired. Hence, for various qualitative properties of $T_\Om$, including closedness, we may assume without loss of generality that $\Om\in\Rat_0^{m\times m}(\BT)$.

Assume $\Om\in\Rat_0^{m\times m}(\BT)$. Then we can write $\Om(z) = q(z)^{-1}P(z)$ where $q\in\cP_\ell$ has  zeroes  only on  $\BT$ and $P\in\cP_{\ell - 1}^{m\times m}$ for some $\ell\in\BN$. Using a similar argument as in the proof of Lemma 2.3 in \cite{GtHJR1} we can show that  $f \in \Dom(T_\Om)$ if and only if  $\Om f =h+ q^{-1}r$, where $h\in H_m^p$ and $r\in\cP^m_{\ell - 1}$. Moreover, $r$ and $h$ are unique, and in that case $T_\Om f = h$.
Now using a similar argument as in the proof of Proposition 2.1 in \cite{GtHJR1} it follows that $T_\Om$ is closed and that the domain of $T_\Om$ contains all the polynomials and so $T_\Om$ is densely defined.

Finally, to prove $\Dom(T_\om)$ is invariant under $S_+$ as well as \eqref{Eq:invar}, let $f\in\Dom(T_\Om)$. Then $\Om f=h+\rho$ for $h\in L^p_m$ and $\rho\in\Rat^m_0(\BT)$. Then $\Om S_+ f=\Om zf = zh+z\rho$. Apply the Euclidean algorithm entrywise to write $z\rho= \rho'+c$ with $\rho'\in\Rat^m_0(\BT)$, with the same poles as $\rho$, and $c\in\BC^m$. Hence $\Om S_+ f = (zh +c)+\rho'\in L^p_m+ \Rat_0^m(\BT)$. Thus $S_+ f\in\Dom(T_\om)$, and we have
\begin{align*}
S_-T_{\Om}S_+ f
&= S_- \BP(zh +c)=S_-(\BP (zh) +c)=S_- \BP z h=\BP z^{-1} \BP z h\\
&=\BP z^{-1} \BP z h + \BP z^{-1} (I_{L^p_m}-\BP) z h=\BP z^{-1} z h =\BP h=T_\Om f,
\end{align*}
where we used that $\BP z^{-1} (I_{L^p_m}-\BP)g=0$ for all $g\in L^p_m$.
\end{proof}

In order to determine the Fredholm properties of $T_\Om$, via the factorization of Theorem \ref{T:fact}, we can reduce to the case of a diagonal matrix function in $\Rat^{m\times m}(\BT)$, with zeroes all on $\BT$. Therefore we will not attempt here to give an explicit description of the kernel, range and domain for the case $\Om\in\Rat^{m\times m}(\BT)$ in the form of an analogue of Theorem 2.2 in \cite{GtHJR2}. For the diagonal matrix case, results are easily obtained by reduction to the scalar case. Here, and in the sequel, we shall identify the direct sum $H^p \oplus \cdots \oplus H^p$ of $m$ copies of $H^p$ with $H_m^p$, and likewise for $L^p$.

\begin{proposition}\label{P:diagonal}
Suppose that $\Om\in\Rat^{m\times m}$ is of the form
$$
\Om(z) = \diag(\om_1(z),\ldots,\om_m(z)),\quad \mbox{with $\om_j\in\Rat, j = 1, 2, \ldots m$}
$$
Then
\begin{equation}\label{DiagDomain}
\Dom(T_\Om) = \Dom(T_{\om_1}) \oplus \Dom(T_{\om_2}) \oplus \cdots \oplus \Dom (T_{\om_m}),
\end{equation}
and for $f=f_1\oplus \cdots\oplus f_m\in\Dom(T_\Om)$ we have $T_\Om f= T_{\om_1} f_1\oplus \cdots\oplus T_{\om_m} f_m$.
Furthermore, we have
\begin{align*}
\Ran(T_\Om) &= \Ran(T_{\om_1}) \oplus \Ran(T_{\om_2}) \oplus \cdots \oplus \Ran (T_{\om_m});\\
\kernel(T_\Om) &= \kernel(T_{\om_1}) \oplus \kernel(T_{\om_2}) \oplus \cdots \oplus \kernel (T_{\om_m}).
\end{align*}
\end{proposition}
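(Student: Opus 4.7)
The plan is to verify the three decompositions directly by exploiting the coordinate-wise action of $\Om$ and of the Riesz projection $\BP$, reducing everything to the scalar results already recorded for $T_{\om_j}$.

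First I would identify $f\in H_m^p$ with its tuple of components $f=f_1\oplus\cdots\oplus f_m$, $f_j\in H^p$. Since $\Om$ is diagonal, $\Om f$ splits as $(\om_1 f_1)\oplus\cdots\oplus(\om_m f_m)$. To establish \eqref{DiagDomain}, I would argue both directions: if $\Om f=h+r$ with $h=h_1\oplus\cdots\oplus h_m\in L_m^p$ and $r=r_1\oplus\cdots\oplus r_m\in\Rat_0^m(\BT)$, then reading off the $j$-th coordinate gives $\om_j f_j=h_j+r_j$ with $h_j\in L^p$ and $r_j\in\Rat_0(\BT)$, so $f_j\in\Dom(T_{\om_j})$. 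Conversely, if each $f_j\in\Dom(T_{\om_j})$ with $\om_j f_j=h_j+r_j$ of the required form, then assembling the $h_j$'s and $r_j$'s entrywise yields a decomposition of $\Om f$ in $L_m^p+\Rat_0^m(\BT)$, so $f\in\Dom(T_\Om)$.

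Next I would invoke the fact that the Riesz projection on $L_m^p$ is entrywise relative to the identification $L_m^p=L^p\oplus\cdots\oplus L^p$, i.e.\ $\BP(h_1\oplus\cdots\oplus h_m)=(\BP h_1)\oplus\cdots\oplus(\BP h_m)$; this is immediate from the definition of $\BP$ as projection onto the nonnegative Fourier modes applied component by component. Hence for $f\in\Dom(T_\Om)$,
\[
T_\Om f=\BP h=(\BP h_1)\oplus\cdots\oplus(\BP h_m)=T_{\om_1}f_1\oplus\cdots\oplus T_{\om_m}f_m,
\]
as stated. The range and kernel decompositions then follow at once: by \eqref{DiagDomain} the components $f_j$ vary independently over $\Dom(T_{\om_j})$ as $f$ varies over $\Dom(T_\Om)$, so $\Ran(T_\Om)=\Ran(T_{\om_1})\oplus\cdots\oplus\Ran(T_{\om_m})$, and $T_\Om f=0$ holds iff each coordinate $T_{\om_j}f_j$ vanishes.

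I do not foresee any serious obstacle. The one technical point to keep in mind is that the coordinate-wise extraction in the forward direction of the domain argument requires the splitting $\om_j f_j=h_j+r_j$ to be unambiguous; this rests on $\Rat_0(\BT)\cap L^p=\{0\}$, which was noted in the sketch of Proposition \ref{P:basic1}. Everything else is routine bookkeeping, consistent with the remark in the text that in the diagonal case the results are obtained by reduction to the scalar case.
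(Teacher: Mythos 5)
Your proposal is correct and follows essentially the same route as the paper: identify $H^p_m$ with the $m$-fold direct sum of $H^p$, read off the decomposition $\Om f = h+\rho$ coordinatewise in both directions to get \eqref{DiagDomain}, note that $\BP$ acts entrywise to obtain the action formula, and deduce the range and kernel decompositions from the coordinatewise action. The extra remark on uniqueness of the splitting (via $\Rat_0(\BT)\cap L^p=\{0\}$) is a harmless addition already covered by the well-definedness of $T_\Om$ in Proposition \ref{P:basic1}.
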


\begin{proof}[\bf Proof]
For $j=1,\ldots,m$, suppose that $f_j\in\Dom (T_{\om_j})$. Then $\om_j f_j = h_j + \rho_j$ with $h_j\in L^p$ and $\rho_j\in\Rat_0(\BT)$ so that
$$
\Om f = h + \rho
$$
where
\begin{equation}\label{fhrho}
f = \begin{pmatrix} f_1\\ \vdots \\ f_m \end{pmatrix}\in H^p_m, \quad h = \begin{pmatrix} h_1\\ \vdots \\ h_m \end{pmatrix}\in L^p_m, \quad \rho = \begin{pmatrix} \rho_1\\ \vdots \\ \rho_m \end{pmatrix}\in\Rat^m_0(\BT).
\end{equation}
Thus $f\in \Dom (T_\Om)$ and we have $T_\Om f= T_{\om_1} f_1\oplus \cdots\oplus T_{\om_m} f_m$. It follows that
\[
\Dom(T_{\om_1}) \oplus \Dom(T_{\om_2}) \oplus \cdots \oplus \Dom (T_{\om_m}) \subset \Dom (T_\Om).
\]

To show the converse inclusion, suppose that $f\in \Dom (T_\Om)$. Then there are $h\in L_m^p$ and $\rho\in\Rat_0^m(\BT)$ with $\Om f = h + \rho$. Decomposing $f$, $h$ and $\rho$ as in \eqref{fhrho}, it follows that $\om_j f_j = h_j + \rho_j$ for each $j$, showing that $f_j\in \Dom (T_{\om_j})$. Hence \eqref{DiagDomain} holds and it follows that the action of $T_\Om$ relates to the action of the operators $T_{\om_j}$, $j=1,\ldots,m$, as claimed.

The formulas for the range and kernel of $T_\Om$ are now straightforward.
\end{proof}

Getting an explicit formulation of the domain, range and kernel of $T_\Om$ beyond the diagonal case is much more complicated than in the scalar case, even when $\Om\in\Rat^{m\times m}(\BT)$. We indicate the difficulty in the following lemma.

\begin{lemma}\label{L:domains}
Let $\Om\in\Rat^{m\times m}$ and write $\Om = \Om_1 + \Om_2$ where $\Om_1 \in \Rat^{m\times m}$ with no poles on $\BT$ and $\Om_2\in\Rat_0^{m\times m}(\BT)$, so that $\Dom (T_\Om) = \Dom (T_{\Om_2})$. Let $\Om_2 = q^{-1}P$ with $q\in\cP$ with zeroes only on $\BT$ and $P\in\cP^{m\times m}$ so that no root of $q$ is also a root of each entry of $P$. Suppose $\Om_2=\sbm{\frac{s_{ij}}{q_{ij}}}_{i,j=1}^m$ with $s_{ij}, p_{ij}\in\cP$ co-prime for all $i$ and $j$  and let $q_j$ be the least common multiple of $q_{1j},\ldots,q_{mj}$. Then
\begin{equation}\label{Eq:domains}
qH_m^p + \cP_{\degr q - 1}^m \subset \oplus_{j=1}^m \left ( q_jH^p + \cP_{\degr q_j - 1} \right ) \subset \Dom (T_\om)
\end{equation}
and both inclusions can be strict.
\end{lemma}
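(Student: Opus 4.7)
The plan is to use Proposition \ref{P:basic1}'s proof sketch to reduce to $\Om=\Om_2\in\Rat_0^{m\times m}(\BT)$, so that the (unlabeled) lemma immediately preceding this one characterizes $\Dom(T_{\Om_2})$ as the set of $g\in H_m^p$ with $\Om_2 g = h+q^{-1}r$, $h\in H_m^p$, $r\in\cP^m_{\degr q-1}$. Both chain inclusions will then follow by Euclidean-division bookkeeping, and the strictness claim is exhibited by two small $2\times 2$ examples.

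For the first inclusion, I would take $f\in qH_m^p+\cP^m_{\degr q-1}$, write $f_j=qg_j+r_j$ componentwise, use $q_j\mid q$ to factor $q=q_j\tilde q_j$, and perform Euclidean division $r_j=q_j a_j+s_j$ with $a_j\in\cP$ and $s_j\in\cP_{\degr q_j-1}$; the rearrangement $f_j=q_j(\tilde q_j g_j+a_j)+s_j$ places $f_j$ in $q_jH^p+\cP_{\degr q_j-1}$. For the second inclusion, I would start from $f_j=q_jg_j+r_j$ with $g_j\in H^p$ and $r_j\in\cP_{\degr q_j-1}$ and expand
\[
(\Om_2 f)_i=\sum_{j=1}^m\frac{s_{ij}}{q_{ij}}(q_jg_j+r_j).
\]
Since $q_{ij}\mid q_j$, each $s_{ij}q_j/q_{ij}$ is a polynomial and the first group of terms lies in $H^p$; Euclidean division of $s_{ij}r_j$ by $q_{ij}$ splits each $(s_{ij}/q_{ij})r_j$ as $a_{ij}+b_{ij}/q_{ij}$ with $\degr b_{ij}<\degr q_{ij}$, and collecting the strictly-proper pieces over the common denominator $q$ yields $R_i/q$ with $\degr R_i<\degr q$. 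Consequently $\Om_2 f=h+q^{-1}R$ with $h\in H_m^p$ and $R\in\cP^m_{\degr q-1}$, so $f\in\Dom(T_{\Om_2})=\Dom(T_\Om)$ by the preceding lemma.

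For strictness I would use two $2\times 2$ examples. The diagonal choice $\Om_2=\diag(1/(z-1),1/(z+1))$ (so $q=z^2-1$, $q_1=z-1$, $q_2=z+1$) shows that the first inclusion can be strict: for $g\in H^p$ not of the form $(z+1)u+c$ with $u\in H^p$, $c\in\BC$, the vector $((z-1)g,0)^\top$ lies in the middle direct sum, yet any representation in $qH_2^p+\cP_1^2$ would force $(z-1)\mid r_1$ and hence $g=(z+1)u+a$, a contradiction. The non-diagonal choice $\Om_2=\sbm{1/(z-1) & -1/(z-1) \\ 0 & 0}$ (adjusted by an $\Om_1$ with no poles on $\BT$ so that the ambient $\Om=\Om_1+\Om_2$ has $\det\Om\not\equiv 0$) shows that the second inclusion can be strict: for $h\in H^p\setminus((z-1)H^p+\BC)$ one has $\Om_2(h,h)^\top=0\in H_m^p$, placing $(h,h)^\top$ in $\Dom(T_{\Om_2})$ while keeping it out of $(q_1H^p+\BC)\oplus(q_2H^p+\BC)$. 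The only non-formal step in the whole argument is the existence of such a $g$ (resp.\ $h$) in $H^p\setminus((z-\alpha)H^p+\BC)$ for $\alpha\in\BT$, which is classical (e.g.\ $g(z)=-\log(1-z)$ for $\alpha=1$ and $1\le p<\infty$, as $(g-c)/(z-1)\notin H^p$ for any $c\in\BC$); everything else is routine polynomial arithmetic.
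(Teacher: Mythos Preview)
Your proof is correct and follows essentially the same route as the paper. Both inclusions are handled the same way: divisibility $q_j\mid q$ for the first, and the expansion of $(\Om_2 f)_i$ together with $q_{ij}\mid q_j$ and Euclidean division for the second (the paper stops at $\sum_j \widetilde r_j/q_{ij}\in\Rat_0(\BT)$ rather than collecting over the common denominator $q$, but that is cosmetic).

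The only genuine difference is in the strictness examples. For the second inclusion the paper uses the full-rank symbol
\[
\Om(z)=\begin{pmatrix} \frac{z}{z-1} & \frac{1}{z-1}\\ \frac{1}{z+1} & \frac{z+2}{z+1}\end{pmatrix},\qquad f=f_1\oplus(-f_1),
\]
where the cancellation $\Om f=f_1\oplus(-f_1)\in H_2^p$ happens without needing to patch in an $\Om_1$ to make the determinant nonzero. Your choice $\Om_2=\sbm{1/(z-1)&-1/(z-1)\\0&0}$ achieves the same cancellation more transparently, at the (harmless) cost of the $\Om_1$ adjustment you noted. For the first inclusion the paper simply cites Lemma~3.5 of \cite{GtHJR1}, whereas you give an explicit diagonal example and the concrete witness $g(z)=-\log(1-z)$; both ultimately rest on the same fact that $(z-\alpha)H^p+\BC\subsetneq H^p$ for $\alpha\in\BT$.
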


\begin{proof}[\bf Proof]
The first inclusion follows since the roots of each $q_j$ will be included in the zeroes of $q$, multiplicities taken into account, which gives $q H^p\subset q_j H^p$. Since $\cP\subset qH^p + \cP_{\degr q - 1}$ and $\cP\subset q_j H^p + \cP_{\degr q_j - 1}$, we obtain
\[
qH^p + \cP_{\degr q - 1} \subset q_j H^p + \cP_{\degr q_j - 1}, \mbox{ for }j=1,\ldots,m.
\]

For the second inclusion, let $f=f_1\oplus \cdots \oplus f_m\in H^p_m$ and suppose $f_j = q_j h_j + r_j\in q_jH^p + \cP_{\degr q_j - 1}$. Then $q_j = u_{ij}q_{ij}$ for some polynomial $u_{ij}$. Write $s_{ij}r_j=q_{ij}r_{ij} + \widetilde{r}_j$ for some $\widetilde{r}_j\in\cP_{\degr q_{ij}-1}$ and $r_{ij}\in\cP$.
Then
\[
s_{ij}f_j = s_{ij}q_jh_j + s_{ij}r_j =q_{ij}(s_{ij}u_{ij}h_j + r_{ij})+\widetilde{r}_j.
\]
Since the $i$-th entry in $\Om_2 f$ is given by $\sum_{j=1}^m \frac{s_{ij}}{q_{ij}}f_j$, we have
\begin{align*}
(\Om_2 f)_i &= \sum_{j=1}^n \frac{s_{ij}}{q_{ij}}f_j
= \sum_{j=1}^m \left (s_{ij} u_{ij}h_j + r_{ij}\right ) + \sum_{j=1}^m \frac{\widetilde{r}_j}{q_{ij}}.
\end{align*}
Note that $\wtil{r}_j/q_{ij}\in \Rat_0(\BT)$ for each $j$. Then also $\sum_{j=1}^m \wtil{r}_j/q_{ij}\in \Rat_0(\BT)$ for each $j$. This proves that $\Om f \in H^p_m + \Rat_0^m(\BT)$, and thus $f\in\Dom(T_\Om)$.

It is not difficult to construct examples where the first inclusion is strict, use for instance Lemma 3.5 in \cite{GtHJR1}. To see that the second inclusion can be strict, consider Example \ref{E:strict} below.
\end{proof}

\begin{example}\label{E:strict}
Consider $\Om\in\Rat^{2 \times 2}(\BT)$ given by
\[
\Om (z) = \begin{pmatrix}
\frac{z}{z-1} & \frac{1}{z-1}\\ \frac{1}{z+1} &\frac{z+2}{z+1}
          \end{pmatrix}.
\]
Take $f=f_1 \oplus -f_1$ with $f_1\in H^p$ arbitrarily. Then $\Om(z) f(z)= f_1\oplus -f_1\in H_2^p$, hence $f\in \Dom(T_\Om)$. In this case, the greatest common divisor of the columns of $\Om$ is $q_1(z)=z^2-1$. By Lemma 3.5 in \cite{GtHJR1}, there exist $f_1\in H^p$ which are not in $(z-1)H^p + \BC$ (or not in $(z+1)H^p + \BC$) and so $f_1\not\in (z^2-1)H^p + \cP_1$. Selecting $f_1$ in such a way, it follows that $f$ is not in $((z^2-1)H^p + \cP_1) \oplus ((z^2-1)H^p + \cP_1)$, proving the second inclusion in \eqref{Eq:domains} can be strict.
\end{example}

\section{Matrix polynomial factorization}\label{S:PolyFact}

In this section we prove a few factorization results for matrix polynomials that will be of use in the sequel. The Smith decomposition for matrix polynomials plays a prominent role in our construction, hence for the readers convenience we will list a variation on it here; see Gantmacher \cite{G59} or Gohberg-Lancaster-Rodman \cite{GLR82} for a proof. For simplicity, since we only encounter this case, we only consider the case of square matrix polynomials whose determinant is not uniformly zero.

\begin{theorem}\label{T:Smith}
Let $R\in\cP^{m \times m}$ with $\det R(z)\not\equiv 0$. Then we can write
\begin{equation}\label{SD}
R(z) = E(z) D(z) F(z)
\end{equation}
where $E,F$ are matrix polynomials with nonzero constant determinants and $D$ is a diagonal matrix polynomial that factors as
\[
D(z) = D_-(z) D_\circ(z) D_+(z)
\]
with $D_-, D_\circ$ and $D_+$ also diagonal matrix polynomials with zeroes in $\BD$, on $\BT$ and outside $\ov{\BD}$, respectively. Moreover, the diagonal matrix polynomial $D=\diag(d_1,\ldots,d_m)$ can be chosen in such a way that all diagonal entries are monic and $d_{j+1}$ is a factor of $d_j$ for $j=1,\ldots,m-1$, and with these additional conditions the diagonal entries $d_1,\ldots, d_m$ are uniquely determined by $R$ and are given by
\[
d_j(z)=\frac{D_j(z)}{D_{j-1}(z)}, \quad j=1,\ldots,m,
\]
where $D_0(z)\equiv 1$ and for $r>0$, $D_r$ is the greatest common devisor of all minors of $R$ of order $r$. Furthermore, the diagonal entries of $D_-$, $D_\circ$ and $D_+$ can be taken monic and ordered with respect to factorization, as was done with $D$, and then $D_-$, $D_\circ$ and $D_+$ are also uniquely determined by $R$.
\end{theorem}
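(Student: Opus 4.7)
The plan is a standard Smith-form construction by elementary row and column operations, together with an invariant-factor argument for uniqueness of $D$, concluded by unique factorization of each diagonal entry over $\BC$ to obtain the three-way split $D=D_-D_\circ D_+$.

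For existence, I would induct on $m$. Since $\det R\not\equiv 0$, some entry of $R$ is nonzero; pick one of minimal degree and use row/column permutations to place it at position $(1,1)$. Apply polynomial division with remainder in the first column: for every $(i,1)$-entry $r_{i1}$, write $r_{i1}=r_{11}q_i+s_i$ with $\degr s_i<\degr r_{11}$, and subtract $q_i$ times row $1$ from row $i$. If some $s_i\neq0$, swap the row containing it into position $1$ and repeat; since the $(1,1)$-degree strictly decreases, the procedure terminates with only $r_{11}$ nonzero in the first column. Carry out the analogous reduction along the first row. If the resulting $r_{11}$ still fails to divide some $(i,j)$-entry in the lower-right block, add row $i$ to row $1$ and restart the reduction, which again drops the $(1,1)$-degree. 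Once the pivot divides every entry, restrict to the lower-right $(m-1)\times(m-1)$ block and apply the inductive hypothesis. All operations are left/right multiplication by unimodular matrix polynomials (elementary matrices have constant nonzero determinant), so their cumulative product yields $R=E\,D\,F$ with $D$ diagonal. Dividing each diagonal entry by its leading coefficient and absorbing the scalars into $E$ or $F$ gives monic entries together with the required divisibility chain.

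For uniqueness of the $d_j$, I would use the determinantal divisors $D_r(z)$, the monic greatest common divisor of all $r\times r$ minors of $R$. By the Cauchy--Binet formula, left multiplication by a unimodular $U$ expresses every $r\times r$ minor of $UR$ as a polynomial combination of $r\times r$ minors of $R$, so $D_r$ for $UR$ is divisible by $D_r$ for $R$; applying the unimodular inverse $U^{-1}$ gives the reverse divisibility, hence equality. The same argument works on the right, so $D_r$ is an invariant of the unimodular equivalence class of $R$. For the diagonal $D$, the divisibility chain forces $D_r$ to equal a product of $r$ of the $d_j$'s occurring at the extremal positions of the chain, and the successive ratios of the $D_r$ then recover each $d_j$ uniquely, matching the formula given in the statement.

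Finally, for the three-way split, each monic polynomial $d_j$ factors uniquely over $\BC$ into linear factors which partition unambiguously by whether each root lies inside $\BD$, on $\BT$, or outside $\ov{\BD}$; this yields $d_j=d_j^-\,d_j^\circ\,d_j^+$. Setting $D_\bullet=\diag(d_j^\bullet)$ for $\bullet\in\{-,\circ,+\}$, the divisibility chain for the $d_j$ restricts to each of these three families, and unique factorization of polynomials makes $D_-,D_\circ,D_+$ depend only on $R$. The step I expect to require the most care is the termination loop in the existence argument, in particular the bookkeeping that ensures the pivot entry eventually divides every remaining entry in the active block; once this core reduction is established, the remaining steps follow routinely from induction, the Cauchy--Binet invariance of $D_r$, and unique factorization in $\cP$.
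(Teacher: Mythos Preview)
The paper does not supply its own proof of this theorem; it merely states the result and refers the reader to Gantmacher \cite{G59} and Gohberg--Lancaster--Rodman \cite{GLR82}. Your outline is exactly the classical argument found in those references---elementary row/column reduction for existence, invariance of the determinantal divisors $D_r$ under unimodular equivalence (via Cauchy--Binet) for uniqueness of the invariant factors, and unique factorization over $\BC$ for the split $D=D_-D_\circ D_+$---so your approach coincides with what the paper is citing.
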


Note that the assumption $\det R(z)\not\equiv 0$ implies that the minors of a given order $r$ cannot all be 0, so that $D_r$ is well defined and not equal to the zero polynomial.

In part of the proofs we require a slightly more refined version of the Smith form, which we shall call the regional Smith form, and which subsumes both the classical Smith form of Theorem \ref{T:Smith} and the local Smith form (see Theorem S1.10 in \cite{GLR82}); indeed the global and local versions appear in case $\La=\BC$ and $\La=\{z_0\}$ for some $z_0\in\BC$, respectively.

\begin{theorem}\label{T:SmithReg}
Let $R\in\cP^{m \times m}$ with $\det R(z)\not\equiv 0$ and let $\Lambda\subset \BC$. Then we can write
\begin{equation}\label{SDreg}
R(z) = E_\La(z) D_\La(z) F_\La(z)
\end{equation}
where $E_\La,F_\La$ are matrix polynomials which are invertible for all $z\in\La$ and $D_\La=\diag(p_1,\ldots,p_m)$ a diagonal matrix polynomial such that $p_j$, $j=1,\ldots,m$, has roots only in $\La$. Furthermore, the polynomials $p_j$, $j=1,\ldots,m$ can be chosen to be monic and in such a way that $p_{j+1}$ is a factor of $p_j$, for $j=1,\ldots,m-1$, and with this additional conditions the diagonal entries $p_1,\ldots, p_m$ are uniquely determined by $R$ and $\La$.
\end{theorem}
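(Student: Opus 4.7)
The plan is to deduce Theorem \ref{T:SmithReg} directly from the classical Smith form of Theorem \ref{T:Smith}. Writing $R(z) = E(z) D(z) F(z)$ with $D = \diag(d_1,\ldots,d_m)$ monic and $d_{j+1} \mid d_j$, for each $j$ I would factor $d_j(z) = p_j(z) q_j(z)$ where $p_j$ is the monic polynomial whose roots (with multiplicities) are precisely those roots of $d_j$ belonging to $\Lambda$, while $q_j$ collects the remaining roots; then $p_j$ and $q_j$ are coprime. The divisibility $d_{j+1} \mid d_j$ transfers cleanly to each factor: $p_{j+1} q_{j+1} \mid p_j q_j$ combined with $\gcd(p_{j+1}, q_j) = 1 = \gcd(q_{j+1}, p_j)$ forces $p_{j+1} \mid p_j$ and $q_{j+1} \mid q_j$.

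Since $D$ is diagonal and $d_j = p_j q_j$, one has $D = D_\Lambda \cdot Q$ with $D_\Lambda := \diag(p_1,\ldots,p_m)$ and $Q := \diag(q_1,\ldots,q_m)$. Setting $E_\Lambda := E$ and $F_\Lambda := Q F$ delivers the required factorization $R = E_\Lambda D_\Lambda F_\Lambda$. By Theorem \ref{T:Smith}, $\det E$ and $\det F$ are nonzero constants, so $E_\Lambda$ is invertible on all of $\BC$, while $\det F_\Lambda(z) = \bigl(\prod_{j=1}^m q_j(z)\bigr) \det F(z)$ has no zeros in $\Lambda$ by construction of the $q_j$, so $F_\Lambda$ is invertible at every $z \in \Lambda$. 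The normalization conditions (monicity of each $p_j$, roots only in $\Lambda$, and the divisibility $p_{j+1}\mid p_j$) are built into the construction.

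For uniqueness, suppose $R = \widetilde{E}_\Lambda \widetilde{D}_\Lambda \widetilde{F}_\Lambda$ is any second factorization meeting the same normalization, with $\widetilde{D}_\Lambda = \diag(\tilde p_1,\ldots,\tilde p_m)$. The argument I have in mind is local: at each $z_0 \in \Lambda$, the matrices $\widetilde{E}_\Lambda(z_0)$ and $\widetilde{F}_\Lambda(z_0)$ are invertible, hence $\widetilde{E}_\Lambda$ and $\widetilde{F}_\Lambda$ are units in the local ring $\mathcal{O}_{z_0}$ of rational functions regular at $z_0$. Consequently the local Smith form of $R$ at $z_0$ coincides with that of $\widetilde{D}_\Lambda$; since the $\tilde p_j$ are already arranged in decreasing divisibility, this local form is simply $\diag\bigl((z-z_0)^{\mathrm{mult}_{z_0}(\tilde p_j)}\bigr)$, and by uniqueness of the local Smith form (Theorem S1.10 in \cite{GLR82}) the exponents $\mathrm{mult}_{z_0}(\tilde p_j)$ are determined by $R$ and $z_0$ alone. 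Because each root of $\tilde p_j$ lies in $\Lambda$ and $\tilde p_j$ is monic, running $z_0$ over all points of $\Lambda$ pins down each $\tilde p_j$ uniquely. The main obstacle, as I anticipate it, is simply reconciling the ordering conventions: one must verify that the non-increasing sequence of local multiplicities at each $z_0 \in \Lambda$ produced by the local Smith form matches the global divisibility chain $p_{j+1}\mid p_j$, but once both are phrased in terms of $\mathrm{mult}_{z_0}(p_j)$ the match is immediate.
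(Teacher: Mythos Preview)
Your existence argument is essentially identical to the paper's: start from the global Smith form, split each invariant factor $d_j=p_jq_j$ according to roots in/outside $\Lambda$, and absorb $\diag(q_j)$ into $F$. Your extra observation that coprimality of $p_{j+1}$ with $q_j$ (and of $q_{j+1}$ with $p_j$) forces $p_{j+1}\mid p_j$ is slightly more explicit than the paper, which simply remarks that ``the division property of the diagonal entries of $D$ carries over to $D_\Lambda$''.

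For uniqueness the paper takes a different route. Rather than localizing at each $z_0\in\Lambda$ and invoking the local Smith form, the paper compares the two factorizations directly and globally: with $\Phi:=E_\Lambda^{-1}\widetilde E_\Lambda$ and $\Upsilon:=\widetilde F_\Lambda F_\Lambda^{-1}$ (rational matrix functions with no poles or zeroes in $\Lambda$), one has $D_\Lambda=\Phi\widetilde D_\Lambda\Upsilon$, and a Cauchy--Binet expansion of the $k\times k$ corner minor $|D_\Lambda|_{L,L}$, $L=\{m-k,\ldots,m\}$, shows that $\widetilde p_{m-k}\cdots\widetilde p_m$ divides $p_{m-k}\cdots p_m$; symmetry and monicity finish the argument. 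Your approach is correct and perhaps more conceptual---you reduce uniqueness to the already-known uniqueness of the local Smith form (Theorem~S1.10 in \cite{GLR82}) and then reconstruct each monic $\widetilde p_j$ from its root multiplicities over $\Lambda$. The paper's Cauchy--Binet argument is more self-contained (it does not black-box the local result) and works uniformly over the whole region at once, at the price of a slightly longer computation.
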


We refer to the diagonal matrix polynomial $D$ in Theorem \ref{T:SmithReg}, made unique through the assumptions that the diagonal entries be monic and the division property for subsequent diagonal entries, as the {\em Smith form of $R$ with respect to $\La$}, and any factorization of the type \eqref{SDreg} with properties as listed in the theorem as a {\em Smith decomposition of $R$ with respect to $\La$}. In the classical case, with $\La=\BC$, we simply speak of the Smith form and Smith decomposition of $R$, without referring to the region.

\begin{proof}[\bf Proof of Theorem \ref{T:SmithReg}]
The existence of a Smith decomposition of $R$ with respect to $\La$ follows directly from the classical Smith decomposition of Theorem \ref{T:Smith}. Write $R(z)$ as in \eqref{SD} with $D(z)=\diag_{j=1}^m(d_j(z))$. Factor each $d_j$ as $d_j(z)=p_j(z)q_j(z)$ with $p_j,q_j\in\cP$, $p_j$ monic and having roots only in $\La$ and $q_j$ having roots only outside $\La$. Now set $D_\La(z)=\diag_{j=1}^m(p_j(z))$, $E_\La=E$ and $F_\La=\diag_{j=1}^m(q_j(z)) F(z)$, but the roots of $d_j$ outside $\La$ can be distributed over $E$ and $F$ in any other way. It is clear that $E_\La$, $D_\La$ and $F_\La$ have the required properties, and that the division property of the diagonal entries of $D$ caries over to $D_\La$ in case the polynomials of $D$ are ordered as in Theorem \ref{T:Smith}.

It remains to prove the uniqueness claim. As in the case of the proof of the local Smith form in \cite[Theorem S1.10]{GLR82}, this relies on the Cauchy-Binet formula, cf.,  Subsection 0.8.7 in \cite{HJ85}. Assume that in addition to the factorization \eqref{SDreg} with properties as listed in the theorem, including the division property of the diagonal entries $p_1,\ldots,p_m$ of $D$, $R$ also admits a second Smith decomposition $R(z)=\wtil{E}_\La(z)\wtil{D}_\La(z)\wtil{F}_\La(z)$ with respect to $\La$, with $\wtil{D}_\La(z)=\diag(\wtil{p}_1(z),\ldots,\wtil{p}_m(z))$ and $\wtil{p}_{j+1}$ a factor of $\wtil{p}_{j}$ for $j=1,\ldots,m-1$. Define $\Phi(z):=E_\La(z)^{-1}\wtil{E}_\La(z)$ and $\Upsilon(z):=\wtil{F}_\La(z)F_\La(z)^{-1}$. Then $\Phi,\Up\in\Rat^{m\times m}$ have no poles or zeroes in $\La$. Hence also all the minors of $\Phi,\Up,\Phi^{-1},\Up^{-1}$ have no poles in $\La$. We have
\begin{equation}\label{DiagFactDiag}
D_\La(z)=\Phi(z) \wtil{D}_\La(z)\Up(z) \ands
\wtil{D}_\La(z)=\Phi(z)^{-1}D_\La(z)\Up(z)^{-1}.
\end{equation}
Write $\un{m}=\{1,\ldots m\}$ and for $L,S\subset \un{m}$ with $\#(L)=\#(S)$ and $M$ an $m\times m$ matrix, write $|M|_{L,S}$ for the minor obtained by selecting only the rows indexed by the entries of $L$ and only the columns indexed by the entries of $S$. Fix $k\in\un{m}$ and set $L=\{m-k,\ldots,m\}$. By the Cauchy-Binet formula
\begin{align*}
p_{m-k}(z)\cdots p_m(z)
& = |D_\La(z)|_{L,L}
=|\Phi(z) \wtil{D}_\La(z)\Up(z)|_{L,L}\\
&=\sum_{S\subset\un{m},\, \#(S)=k} |\Phi(z)|_{L,S} |\wtil{D}_\La(z)\Up(z)|_{S,L} \\
&=\sum_{S\subset\un{m},\, \#(S)=k} |\Phi(z)|_{L,S} |\wtil{D}_\La(z)|_{S,S} |\Up(z)|_{S,L}\\
&=\sum_{S\subset\un{m},\, \#(S)=k} |\Phi(z)|_{L,S} |\Up(z)|_{S,L}\prod_{j\in S}\wtil{p}_j(s).
\end{align*}
Since $|\Phi(z)|_{L,S}$ and $|\Up(z)|_{S,L}$ do not have poles in $\La$, $\prod_{j\in S}\wtil{p}_j(s)$ is a factor of the numerator of $|\Phi(z)|_{L,S} |\Up(z)|_{S,L}\prod_{j\in S}\wtil{p}_j(s)$ for all $S\subset\un{m}$ with $\#(S)=k$. Also, by the factorization order of the diagonal entries in $\wtil{D}_\La$ we know that $\wtil{p}_{m-k}(z)\cdots \wtil{p}_m(z)$ is a factor of $\prod_{j\in S}\wtil{p}_j(s)$ for all $S\subset\un{m}$ with $\#(S)=k$. Consequently, by the above identity we know that $\wtil{p}_{m-k}(z)\cdots \wtil{p}_m(z)$ is a factor of $p_{m-k}(z)\cdots p_m(z)$. Applying the same argument to the second identity in \eqref{DiagFactDiag}, one obtains that also $p_{m-k}(z)\cdots p_m(z)$ is a factor of $\wtil{p}_{m-k}(z)\cdots \wtil{p}_m(z)$, hence they are equal because both are monic polynomials. Since this identity holds for all $k\in\un{m}$, it follows that $p_j=\wtil{p}_j$ for $j=1,\ldots,m$.
\end{proof}

\begin{corollary}\label{C:SmithMult}
Let $R\in\cP^{m \times m}$ with $\det R(z)\not\equiv 0$ and let $\Lambda\subset \BC$. For all matrix polynomials $M,N\in\cP^{m\times m}$ which are invertible for all $z\in\La$, the matrix polynomials $R$ and $MRN$ have the same Smith form with respect to~$\La$.
\end{corollary}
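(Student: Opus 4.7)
The plan is a direct application of the existence and uniqueness parts of Theorem \ref{T:SmithReg}. By that theorem, $R$ admits a Smith decomposition with respect to $\La$, namely
\[
R(z)=E_\La(z)D_\La(z)F_\La(z),
\]
with $E_\La,F_\La\in\cP^{m\times m}$ invertible at every $z\in\La$ and $D_\La$ the (normalized) Smith form of $R$ with respect to $\La$.

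Multiplying on the left by $M$ and on the right by $N$ gives
\[
M(z)R(z)N(z)=\bigl(M(z)E_\La(z)\bigr)\,D_\La(z)\,\bigl(F_\La(z)N(z)\bigr).
\]
The outer factors $ME_\La$ and $F_\La N$ are matrix polynomials, being products of matrix polynomials, and their determinants are the products $\det M\cdot\det E_\La$ and $\det F_\La\cdot\det N$, respectively. By hypothesis $\det M$ and $\det N$ have no zeroes in $\La$, and by the defining property of the Smith decomposition neither do $\det E_\La$ and $\det F_\La$. Hence $ME_\La$ and $F_\La N$ are invertible at every $z\in\La$, so the displayed factorization is itself a Smith decomposition of $MRN$ with respect to $\La$, with middle factor $D_\La$ satisfying the monic/division normalization already built into the decomposition of $R$.

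The uniqueness statement in Theorem \ref{T:SmithReg} then forces the Smith form of $MRN$ with respect to $\La$ to equal $D_\La$, which is the Smith form of $R$ with respect to $\La$. No subtle step is involved; the only thing one has to check carefully is that $\det(ME_\La)$ and $\det(F_\La N)$ are nonzero on $\La$, which is immediate from the hypotheses on $M,N$ and from the definition of a Smith decomposition with respect to $\La$.
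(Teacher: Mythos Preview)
Your proof is correct and follows essentially the same argument as the paper: write $R=E_\La D_\La F_\La$, observe that $MRN=(ME_\La)D_\La(F_\La N)$ is again a Smith decomposition with respect to $\La$, and invoke the uniqueness in Theorem \ref{T:SmithReg}. The paper's version is slightly terser but the logic is identical.
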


\begin{proof}[\bf Proof]
Let $R(z)=E(z)D(z)F(z)$ be the factorization of Theorem \ref{T:SmithReg} with $D$ the Smith form of $R$ with respect to $\La$. Then
\[
M(z)R(z)N(z)=(M(z)E(z))D(z)(F(z)N(z))
\]
is a factorization of the type in Theorem \ref{T:SmithReg}, with $M(z)E(z)$ and $F(z)N(z)$ both being invertible for all $z\in\La$ and the diagonal polynomials of $D$ still have the required properties to guarantee uniqueness. In particular $D$ is also the Smith form of $MRN$ with respect to $\La$.
\end{proof}

The next result is used to repair an oversight in the construction in \cite{CG}.

\begin{lemma}\label{L:fact_3}
Let $F\in\cP^{m\times m}$ with $\det F(z) = z^np(z)$ for a $p\in\cP$ with $p(0)\neq 0$. Then
\begin{equation}\label{F=QR}
F(z) = Q(z) R(z)
\end{equation}
where $Q,R\in\cP^{m\times m}$ with $\det R(z)=p(z)$ and $Q$ a lower triangular matrix polynomial with $\det Q(z)=z^n$. In particular, the diagonal entries of $Q$ are of the form $z^{n_1},\ldots,z^{n_m}$, with $z^{n_j}$ on the $j$-th diagonal entry, $\sum_{j=1}^m n_j=n$, and, moreover, the indices $n_1,\ldots,n_m$ are uniquely determined by $F$. Furthermore, a factorization \eqref{F=QR} exists with the polynomial entries left of the diagonal of $Q$ having a degree lower than the degree of the diagonal entry in the same row, and with this additional condition $Q$ and $R$ are uniquely determined.
\end{lemma}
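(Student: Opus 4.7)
I will argue by induction on $m$. The base case $m = 1$ is immediate: write $F(z) = z^n p(z)$ and set $Q(z) = z^n$, $R(z) = p(z)$. For the inductive step, consider $a_1(z) := \gcd(F_{11}, \ldots, F_{1m})$, the greatest common divisor of the entries of the first row of $F$, and write $a_1(z) = z^{n_1} \wtil{a}_1(z)$ with $\wtil{a}_1(0) \neq 0$. By the Hermite normal form applied to the $1 \times m$ polynomial row $(F_{11}, \ldots, F_{1m})$ over the PID $\BC[z]$, there exists a unimodular $V \in \cP^{m \times m}$ with $\det V = 1$ such that the first row of $FV$ is $(a_1(z), 0, \ldots, 0)$; this gives the block decomposition
\[
F(z) V(z) = \begin{pmatrix} z^{n_1} \wtil{a}_1(z) & 0 \\ c(z) & F'(z) \end{pmatrix}.
\]
A determinant count gives $\det F'(z) = z^{n-n_1} p(z)/\wtil{a}_1(z)$; since $\det F'$ is polynomial and $\gcd(\wtil{a}_1, z) = 1$, the quotient $p'(z) := p(z)/\wtil{a}_1(z)$ lies in $\cP$ with $p'(0) \neq 0$. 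By the inductive hypothesis, $F' = Q' R'$ with $Q' \in \cP^{(m-1) \times (m-1)}$ lower triangular of monomial diagonal $z^{n_2}, \ldots, z^{n_m}$ and $\det R' = p'$.

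\textbf{Assembly and degree reduction.} Next I will seek
\[
\wtil{Q}(z) := \begin{pmatrix} z^{n_1} & 0 \\ x(z) & Q'(z) \end{pmatrix}, \qquad \wtil{R}(z) := \begin{pmatrix} \wtil{a}_1(z) & 0 \\ y(z) & R'(z) \end{pmatrix},
\]
so that $\wtil{Q} \wtil{R} = FV$; this reduces to solving $x(z)\wtil{a}_1(z) + Q'(z) y(z) = c(z)$ for polynomial column vectors $x, y$. Solvability in polynomials follows from the coprimality $\gcd(\wtil{a}_1, z^{n-n_1}) = 1$, combined with the identity $z^{n-n_1} I_{m-1} = Q' \cdot \mathrm{adj}(Q')$ and the Chinese Remainder Theorem in $\BC[z]$. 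Setting $Q := \wtil{Q}$ (lower triangular with monomial diagonals $z^{n_j}$) and $R := \wtil{R} V^{-1}$ (polynomial, since $V^{-1}$ is polynomial when $V$ is unimodular) yields the factorization $F = QR$ with $\det R = p$. The degree condition $\deg Q_{ij} < n_i$ for $i > j$ is then achieved iteratively: for each such $(i, j)$, perform Euclidean division $Q_{ij}(z) = \alpha(z) z^{n_i} + \rho(z)$ with $\deg \rho < n_i$, and replace $Q$ with $Q (I - \alpha(z) E_{ij})$ and $R$ with $(I + \alpha(z) E_{ij}) R$, where $E_{ij}$ is the matrix with $1$ in position $(i, j)$ and zero elsewhere. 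These updates preserve lower triangularity, the monomial diagonals of $Q$, the polynomial structure of $R$, and the product $QR = F$.

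\textbf{Uniqueness.} Suppose $F = Q_1 R_1 = Q_2 R_2$ are two factorizations satisfying the degree condition. The matrix $M(z) := Q_2(z)^{-1} Q_1(z) = R_2(z) R_1(z)^{-1}$ is rational; from the left expression, its only possible poles are at $z=0$, while from the right, they are only at roots of $\det R_1 = p$. Since $p(0) \neq 0$, these pole sets are disjoint, so $M$ is polynomial. As a product of lower triangular matrices, $M$ is lower triangular with diagonal entries $z^{n_j^{(1)} - n_j^{(2)}}$; applying the same reasoning to $M^{-1}$ forces $n_j^{(1)} = n_j^{(2)}$ for all $j$, giving uniqueness of the indices. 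Then $M$ is unit lower triangular polynomial, and using $Q_1 = Q_2 M$, an induction on $i - j$ (comparing degrees row by row using that $\deg (Q_k)_{ij} < n_i$ for $i > j$, $k = 1, 2$) forces every strictly sub-diagonal entry of $M$ to vanish, so $M = I$; therefore $Q_1 = Q_2$ and $R_1 = R_2$.

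\textbf{Main obstacle.} The central technical point is the polynomial solvability of the assembly equation $x \wtil{a}_1 + Q' y = c$ in the inductive step. This rests on the coprimality of $\wtil{a}_1$ (whose roots avoid $z=0$) with $z^{n-n_1} = \det Q'$ (whose only root is $z=0$), a direct consequence of the hypothesis $p(0) \neq 0$; the same coprimality also drives the pole-set disjointness used in the uniqueness argument.
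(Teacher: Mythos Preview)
Your argument is correct, and the uniqueness portion is essentially the paper's own argument (pole-disjointness of $Q_2^{-1}Q_1$ versus $R_2R_1^{-1}$ is just a cleaner phrasing of the paper's ``multiply by $p$'' trick, and your induction on $i-j$ matches the paper's row-by-row degree comparison). The existence argument, however, is genuinely different. The paper proceeds by an explicit iterative row-reduction: first strip the maximal power of $z$ from each row, then repeatedly find the first linear dependence among the rows at $z=0$, apply an elementary lower-triangular row operation to create a new factor of $z$, and push that operation leftward past the accumulating diagonal of monomials. The degree condition on the sub-diagonal entries is maintained \emph{throughout} the recursion (this is the content of the paper's Part~3), so no separate reduction step is needed at the end. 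Your approach instead inducts on $m$: a Hermite step on the first row isolates $z^{n_1}$ in the $(1,1)$ slot, the inductive hypothesis handles the lower-right block, and a Bezout/CRT argument using $\gcd(\wtil a_1, z^{n-n_1})=1$ assembles the pieces; the degree condition is then imposed afterwards by unit-lower-triangular column operations. Your route is more structural and leans on standard PID machinery, while the paper's is more algorithmic and self-contained.

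One small point worth tightening: in your degree-reduction step, multiplying $Q$ on the right by $I-\alpha E_{ij}$ alters not only the $(i,j)$ entry but also every entry $(k,j)$ with $k>i$ (since $Q_{ki}$ can be nonzero there). Your ``iteratively'' is correct only if you process each column from top to bottom, so that the entries disturbed are precisely those not yet reduced; you should say this explicitly.
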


\begin{proof}[\bf Proof]
The proof follows by a recursive procedure, in four parts.\smallskip

\paragraph{\bf Part 1:\ First step}
Write
$$F(z) =
\diag(z^{n_1},\ldots, z^{n_m})\, R_1(z)
$$
where $R_1\in\cP^{m\times m}$ and  ${n_i}$ is the highest power of $z$ dividing all the entries in row $i$. Then $\det R_1(z) = z^{n'}p(z)$ with $n' = n - \sum_{i=1}^m n_i$ and $Q_1(z):=\diag(z^{n_1},\ldots, z^{n_m})$ is a lower triangular matrix polynomial with $\det Q_1(z)=z^{n-n'}$ with entries left of the diagonal equal to 0, thus of degree 0, hence they have a lower degree that the diagonal entry in the same row. If $n'=0$ we take $Q=Q_1$ and $R=R_1$ and are done.\smallskip

\paragraph{\bf Part 2:\ Second step}
In case $n'\neq 0$, write
$$
R_1(z) = \begin{pmatrix}
          r_1(z) \\ \vdots \\ r_m(z)
         \end{pmatrix},\mbox{ with $r_j\in\cP^{1 \times m},\, j=1,\ldots, m$}.
$$
Then $r_1(0),  \ldots, r_m(0)$ are linearly dependent. Let $k$ be the smallest integer such that $r_1(0), \ldots, r_k(0)$ are linearly dependent. Then there are numbers $\alpha_1, \alpha_2, \ldots , \alpha_{k-1} $ such that
$$
\alpha_1 r_1(0) +  \cdots + \alpha_{k-1} r_{k-1}(0) + r_k(0) = 0.
$$
Put
$$
L =
\begin{pmatrix}
1 & 0 & \cdots & \cdots & \cdots & \cdots & \cdots & 0\\
0 & 1 & \ddots & & & & & \vdots  \\
\vdots&\ddots& \ddots & \ddots & & & & \vdots \\
0 & \cdots & 0 & 1 & 0 & \cdots & \cdots & 0\\
\alpha_1 & \cdots & \cdots & \alpha_{k-1} & 1 & 0 & \cdots & 0 \\
0 & \cdots & \cdots & \cdots & 0 & 1 &\ddots &\vdots   \\
\vdots &  &  & &&\ddots &  \ddots & 0\\
0 & \cdots & \cdots & \cdots & \cdots & \cdots & 0 & 1
\end{pmatrix}
$$
with the $\al_j$'s appearing  in the $k$-th row. Then
$$
F(z) = \diag(z^{n_1},\ldots, z^{n_m})\, L^{-1} L R_1(z).
$$
Since $\sum_{i=1}^k \alpha_i r_i(z)$ has a root at zero, where we set $\al_k=1$, we have
$$L R_1(z) =
\begin{pmatrix} r_1(z) \\ \vdots \\ r_{k-1}(z) \\ \sum_{i=1}^k \alpha_i r_i(z) \\ r_{k+1}(z) \\ \vdots \\ r_m(z) \end{pmatrix} =
\diag(1,\ldots,1,z^\ell,1,\ldots,1)\,
R_2(z)
$$
for some $\ell \geq 1$ and $R_2\in\cP^{m\times m}$, where on the right-hand side $z^\ell$ appears in the $k$-th diagonal entry. Note that
\begin{equation}\label{Einv}
L^{-1} =
\begin{pmatrix}
1 & 0 & \cdots & \cdots & \cdots & \cdots & \cdots & 0\\
0 & 1 & \ddots & & & & & \vdots  \\
\vdots&\ddots& \ddots & \ddots & & & & \vdots \\
0 & \cdots & 0 & 1 & 0 & \cdots & \cdots & 0\\
-\alpha_1 & \cdots & \cdots & -\alpha_{k-1} & 1 & 0 & \cdots & 0 \\
0 & \cdots & \cdots & \cdots & 0 & 1 &\ddots &\vdots   \\
\vdots &  &  & &&\ddots &  \ddots & 0\\
0 & \cdots & \cdots & \cdots & \cdots & \cdots & 0 & 1
\end{pmatrix}
\end{equation}
again with the $\al_j$'s appearing in the $k$-th row. By direct computation we find
\[
\diag(z^{n_1},\ldots,z^{n_m})\, L^{-1} = G_2(z)\, \diag(z^{n_1},\ldots,z^{n_m})
\]
where
\[
G_2(z) :=
\begin{pmatrix}
1 & 0 & \cdots & \cdots & \cdots & \cdots & \cdots & 0\\
0 & 1 & \ddots & & & & & \vdots  \\
\vdots&\ddots& \ddots & \ddots & & & & \vdots \\
0 & \cdots & 0 & 1 & 0 & \cdots & \cdots & 0\\
-\alpha_1 z^{n_k-n_1} & \cdots & \cdots & -\alpha_{k-1} z^{n_k - n_{k-1}} & 1 & 0 & \cdots & 0 \\
0 & \cdots & \cdots & \cdots & 0 & 1 &\ddots &\vdots   \\
\vdots &  &  & &&\ddots &  \ddots & 0\\
0 & \cdots & \cdots & \cdots & \cdots & \cdots & 0 & 1
\end{pmatrix},
\]
with the $-\alpha_j z^{n_k-n_j}$ entries in the $k$-th row. We now have
$$
F(z) = G_2(z)\, \diag(z^{n_1},\ldots,z^{n_{k - 1}},z^{n_k + \ell},z^{n_{k + 1}},\ldots,z^{n_m})\, R_2(z),
$$
and $Q_2(z):=G_2(z)\, \diag( z^{\wtil{n}_1},\ldots, z^{\wtil{n}_m})\in\cP^{m\times m}$, where we set $\wtil{n}_j = n_j$ if $j\not= k$ and $\wtil{n}_k = n_k + \ell$. Note that $\det R_2(z) = z^{n' - \ell}p(z)$, while the entries in the $k$-th row of $Q_2$ left of the diagonal have degree $n_k$ and the diagonal entry in the $k$-th row has degree $n_k+\ell> n_k$ and all other off-diagonal entries are 0. In case $n'-\ell=0$ (equivalently, $\sum_{j=1}^m \wtil{n}_j=n$), take $Q=Q_2$ and $R=R_2$ and we are done.\smallskip

\paragraph{\bf Part 3:\ Recursion}
In case $n'-\ell\neq 0$, repeat the above construction starting with $R_2$ instead of $R_1$. In each step the power of $z$ in the determinant of $R_j$ decreases, hence the process stops after at most $n$ steps. It remains to see that the entries left of the diagonal have the required restriction on the degree. To see that this is the case, we claim that in the $j$-th step, going from factorization $F(z)=Q_{j-1}(z)R_{j-1}(z)$ to $F(z)=Q_{j}(z)R_{j}(z)$, all entries of $Q_j$ left of the diagonal have a degree lower than the degree of the diagonal entry in the same row and, in addition, if $k$ is the first integer so that rows 1 to $k$ in $R_{j-1}(0)$ are linearly dependent, then in $Q_j$ all off-diagonal entries in rows $k+1$ to $m$ are 0. These properties certainly hold in the first two steps. Now assume this is satisfied in the step leading to the factorization $F(z)=Q_{j-1}(z)R_{j-1}(z)$. Assume $k$ is the first integer so that rows 1 to $k$ in $R_{j-1}(0)$ are linearly dependent. From the procedure it follows that in the previous step, the first occurrence of linear dependence in the rows of $R_{j-2}(0)$ must also have been in rows 1 to $k$. Hence, by assumption, in $R_{j-1}$ the off-diagonal entries in rows $k+1$ to $m$ are all 0. Then $Q_j$ is obtained by multiplying $Q_{j-1}$ with a matrix $L^{-1}$ of the form \eqref{Einv} on the right and then with $\diag(1,\ldots,1,z^\ell, 1,\ldots,1)$ also on the right, where $z^\ell$ appears in the $k$-th entry. One easily checks that rows 1 to $k-1$ of $Q_{j-1}$ and $Q_{j}$ coincide, due to the lower triangular structure, and that rows $k+1$ to $m$ of $Q_{j-1}$ and $Q_{j}$ coincide, due to the zeros in the off-diagonal entries in the rows $k+1$ to $m$. In particular, it follows from the above arguments for all but the $k$-th row that the entries left of the diagonal have a degree less than the diagonal entry in the same row, while all off-diagonal entries in rows $k+1$ to $m$ remain 0. Assume the entries of $Q_{j-1}$ left of the diagonal are given by $q_{i,j}$, $i>j$ and that the $i$-th diagonal entry is $z^{n_i}$, so that $\degr q_{i,j}< n_i$. Then, on the $k$-th row, left of the diagonal we obtain entries of the form $q_{k,j}(z)- \al_j z^{n_j}$ with degree at most $n_j$, while the diagonal entry becomes $z^{n_j+\ell}$, which proves our claim.\smallskip

\paragraph{\bf Part 4: Uniqueness}
We first show that the diagonal entries of $Q$ are unique, without assuming additional degree constraints on the entries left of the diagonal. Suppose that there is another factorization of $F$ of the same type, i.e.,
\[
Q(z)R(z)=F(z) = Q'(z) R'(z),
\]
with $Q',R'\in\cP^{m\times m}$, $\det R'(z)=p(z)$ and $Q'$ lower triangular so that $\det Q'(z)=z^n$. Assume the $j$-th diagonal entries of $Q$ and $Q'$ are $z^{n_j}$ and $z^{s_j}$, respectively, for $j=1,\ldots,m$. Note that $Q^{-1}(z)$ is in $\Rat^{m\times m}$, lower triangular with $z^{-n_j}$ on the $j$-th diagonal entry, so that
\begin{equation}\label{lowertrian}
R(z)(R')^{-1}(z)  = Q^{-1}(z)Q'(z)
 = \begin{pmatrix}
z^{s_1 - n_1} & 0 & \cdots & 0\\
* & z^{s_2 - n_2} & \ddots &\vdots\\
\vdots &\ddots &\ddots  & 0\\
* & \cdots  &  *  & z^{s_m - n_m}
\end{pmatrix}.
\end{equation}
We have $\sum_{j=1}^m s_j=\sum_{j=1}^m n_j=n$, so we are done if we can prove that $s_j \geq n_j$ for all $j$. To see that this is the case, multiply \eqref{lowertrian} with $p$. Since $\det R'(z)=p(z)$, we have $p(z)(R')^{-1}(z)\in\cP^{m\times m}$. Hence, the left hand side in \eqref{lowertrian}, multiplied with $p$, is in $\cP^{m \times m}$. Consequently, also the right hand side in \eqref{lowertrian}, multiplied with $p$, is in $\cP^{m \times m}$. Note that the diagonal entries are of the form $p(z)z^{s_j-n_j}$ and must be in $\cP$. Since $p(0)\neq 0$, this implies $s_j\geq n_j$ for all $j$, as claimed. Thus $n_j=s_j$ for all $j$. It follows that the diagonal entries of $Q$ are uniquely determined.

Write $q_{i,j}$ and $q'_{i,j}$ for the $(i,j)$-th entries of $Q$ and $Q'$, respectively. Now assume $\degr q_{i,j}<n_j$ and $\degr q'_{i,j}<s_j=n_j$. Set $\wtil{R}(z):=R(z)(R')^{-1}(z)\in\Rat^{m\times m}$. We observed above that $\wtil{R}$ is lower triangular with diagonal entries equal to 1, since $s_j=n_j$ for all $j$, and $p(z)\wtil{R}\in\cP^{m\times m}$, so that the entries of $\wtil{R}$ left of the diagonal have the form $\wtil{r}_{i,j}(z)/p(z)$ for a $\wtil{r}_{i,j}\in\cP$ for the $(i,j)$-th entry, with $i>j$. Also, we have $Q(z)\wtil{R}(z)=Q'(z)$. We claim that $\wtil{R}(z)=I_m$ for all $z$, which proves our claim. To see this, we need to show $\wtil{r}_{i,j}=0$ for all $i>j$. Fix $j\in\{1,\ldots,m\}$. Then for $i> j$ the identity $Q(z)\wtil{R}(z)=Q'(z)$ yields
\[
q'_{i,j}(z)=q_{i,j}(z)+\sum_{k=j+1}^{i-1} \frac{q_{i,k}(z)\wtil{r}_{k,j}(z)}{p(z)} + \frac{\wtil{r}_{i,j}(z)z^{n_i}}{p(z)}.
\]
Recall that $p(0)\neq 0$, so $p(z)$ and $z^{n_i}$ have no common factor. First take $i=j+1$. In that case we find that $q'_{j+1,j}(z)-q_{j+1,j}(z)=\wtil{r}_{j+1,j}(z) z^{n_{j+1}}/p(z)$. Assume $\wtil{r}_{j+1,j}\neq 0$.  Since $p$ and $z^{n_{j+1}}$ have no common factor, the right hand side is a polynomial of degree at least $n_{j+1}$. However, by assumption the degree of the polynomial on the left hand side is less than $n_{j+1}$, leading to a contradiction. Hence $\wtil{r}_{j+1,j}=0$. Now consider $i=j+2$. Since $\wtil{r}_{j+1,j}=0$, the above identity for $q'_{j+2,j}$ reduces to $q'_{j+2,j}(z)-q_{j+2,j}(z)=\wtil{r}_{j+2,j}(z) z^{n_{j+2}}/p(z)$, and a similar argument shows that $\wtil{r}_{j+2,j}=0$. Proceeding this way, one obtains
$\wtil{r}_{i,j}=0$ for all $i>j$, which completes the proof.
\end{proof}

\begin{lemma}\label{L:invariantfact}
Let $P\in\cP^{m\times m}$ and $N=\degr P$, so that $\widetilde{P}(z) := z^N P(\frac{1}{z})$ is in $\cP^{m\times m}$.
Furthermore, let $P(z) = E(z)D(z)F(z)$ and $\widetilde{P}(z) = \wtil{E}(z)\wtil{D}(z)\wtil{F}(z)$ be the Smith decompositions of $P$ and $\wtil{P}$, so that the diagonal elements $p_1,\ldots,p_m$ of $P$ and $\wtil{p}_1,\ldots,\wtil{p}_m$ of $\wtil{P}$ are ordered as in Theorem \ref{T:Smith}. Let $\al\neq 0$. Then $d_j$ has a root at $\al$ of order $k$ if and only if $\wtil{d}_j$ has a root of order $k$ at $\al^{-1}$.
\end{lemma}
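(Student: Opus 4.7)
The plan is to reduce the claim to an elementary statement about reciprocal polynomials applied to the determinantal divisors $D_r$ appearing in Theorem \ref{T:Smith}. Recall that, with the ordering assumed on the diagonal entries of $D$, one has $d_j(z)=D_j(z)/D_{j-1}(z)$, where $D_r$ is the greatest common divisor of all $r\times r$ minors of $P(z)$; an analogous identity holds for $\widetilde{d}_j$ and $\widetilde{D}_r$ coming from the minors of $\widetilde{P}(z)$. Hence it suffices to show that for every $r\in\{1,\ldots,m\}$ and every $\al\neq 0$, the multiplicity of $\al$ as a root of $D_r$ equals the multiplicity of $\al^{-1}$ as a root of $\widetilde{D}_r$.

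The first step is to relate the minors of $P$ to those of $\widetilde{P}$ directly. For $L,S\subset\{1,\ldots,m\}$ with $\#L=\#S=r$, the definition $\widetilde{P}(z)=z^N P(1/z)$ applied entrywise gives $\widetilde{P}_{L,S}(z)=z^N P_{L,S}(1/z)$, and therefore
\[
\det \widetilde{P}_{L,S}(z) \;=\; z^{rN}\,\det P_{L,S}(1/z).
\]
So each $r\times r$ minor of $\widetilde{P}$ is of the form $z^{rN}m(1/z)$, with $m(w):=\det P_{L,S}(w)$ a polynomial of degree at most $rN$.

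The second step is the polynomial fact that, for any nonzero polynomial $m$ of degree at most $d$ and any $\al\neq0$, the polynomial $z^{d}m(1/z)$ has a root of order exactly $k$ at $\al^{-1}$ if and only if $m$ has a root of order exactly $k$ at $\al$. This follows by writing $m(w)=cw^{e}\prod_i(w-\al_i)^{k_i}$ with $\al_i\neq 0$, so that $z^d m(1/z)=cz^{d-e-\sum k_i}\prod_i(1-\al_i z)^{k_i}$, which exhibits roots at the reciprocals $\al_i^{-1}$ with the same multiplicities. Taking the minimum over the family of $r\times r$ minors, the same identity transfers to the greatest common divisor: the multiplicity of $\al$ as a root of $D_r$ equals the multiplicity of $\al^{-1}$ as a root of $\widetilde{D}_r$ whenever $\al\neq 0$.

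The final step is to combine these two observations with the formula $d_j=D_j/D_{j-1}$ (and $\widetilde{d}_j=\widetilde{D}_j/\widetilde{D}_{j-1}$) from Theorem \ref{T:Smith}. The multiplicity of $d_j$ at $\al$ is the difference of the multiplicities of $D_j$ and $D_{j-1}$ at $\al$, which by the previous step coincides with the difference of the multiplicities of $\widetilde{D}_j$ and $\widetilde{D}_{j-1}$ at $\al^{-1}$, i.e., with the multiplicity of $\widetilde{d}_j$ at $\al^{-1}$. I do not anticipate a real obstacle; the only mildly delicate point is to state the reciprocal-polynomial fact carefully enough to apply it to each minor with a uniform exponent $rN$, and then to pass from individual minors to their gcd (which is immediate since taking gcd commutes with taking the per-point multiplicity as a minimum over the family).
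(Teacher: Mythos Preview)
Your proposal is correct and follows essentially the same approach as the paper: both reduce to the determinantal-divisor formula $d_j=D_j/D_{j-1}$ from Theorem \ref{T:Smith} and exploit the relation between the $r\times r$ minors of $P$ and those of $\widetilde{P}$ induced by $\widetilde{P}(z)=z^N P(1/z)$. The paper simply asserts $\widetilde{D}_j(z)=z^{Nj}D_j(1/z)$ and deduces $\widetilde{d}_j(z)=z^N d_j(1/z)$ directly, whereas you spell out the reciprocal-polynomial step and the passage to the gcd via per-root multiplicities; your version is slightly more careful but not materially different.
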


\begin{proof}[\bf Proof]
Let $P$, $\wtil{P}$ and their Smith decompositions be as stated in the lemma. Recall from Theorem \ref{T:Smith} that for $j=1,\ldots,m$ we have
\[
d_j(z) = \frac{D_j(z)}{D_{j-1}(z)}\ands
\wtil{d}_j(z) = \frac{\wtil{D}_j(z)}{\wtil{D}_{j-1}(z)}
\]
with $D_r$ and $\wtil{D}_r$ the g.c.d.\ of all minors or order $r$ of $P$ and $\wtil{P}$, respectively. The relation $\widetilde{P}(z) = z^N P(\frac{1}{z})$ translates in terms of the g.c.d.\ of the minors as $\wtil{D}_j(z)=z^{N j} D_j(\frac{1}{z})$, so that
\[
\wtil{d}_j(z) = \frac{\wtil{D}_j(z)}{\wtil{D}_{j-1}(z)} = \frac{D_j(\frac{1}{z})z^{Nj}}{D_{j-1}(\frac{1}{z})z^{N(j-1)}} = z^N d_j (\mbox{$\frac{1}{z}$}).
\]
From this it directly follows for $\al\neq 0$ that $d_j$ has a root of order $k$ at $\al$ if and only if $\wtil{d}_j$ has a root of order $k$ at $\alpha^{-1}$.
\end{proof}

\section{The Wiener-Hopf type factorization}\label{S:FactM}

In this section we prove the Wiener-Hopf type factorization presented in Theorem \ref{T:fact} as well as the uniqueness claims of Theorem \ref{T:factUnique}. In fact, we give a construction for how such a factorization can be obtained. The construction relies strongly the ideas of the construction in Chapter 2 of \cite{CG} for contours more general than the unit circle, but without the possibility of having poles on the contour.

Using the polynomial factorization results of Section \ref{S:PolyFact}, we are now ready to prove Theorem \ref{T:fact}.

\begin{proof}[\textbf{Proof of Theorem \ref{T:fact}}]
Let $\Om\in\Rat^{m\times m}$ with $\det \Om(z)\not\equiv 0$. The proof is an adaptation of the proof of Theorem 2.1 in \cite{CG} and will be divided into four steps.\smallskip

\paragraph{Step 1} Firstly, let $q$ be the least common multiple of the denominators of the matrix entries of $\Om$, so that $q(z)\Om(z)\in\cP^{m \times m}$. As in Lemma 5.1 in \cite{GtHJR1}, write $q^{-1}(z) =  z^\kappa \om_-(z) \om_\circ(z)  \om_+(z)$ where $\om_-(z)$ and $\om_-(z)^{-1}$ are minus functions, $\om_+(z)$ and $\om_+(z)^{-1}$ are plus functions and $\om_\circ(z)$ has zeroes and poles only on $\BT$. Note that $\kappa$ is uniquely determined by $q$, while the factors $\om_-, \om_\circ,\om_+$ are uniquely determined up to a nonzero constant. In fact, if $q(z)=q_-(z)q_\circ(z)q_+(z)$ with $q_-,q_\circ,q_+\in\cP$ the factors of $q$ with roots only inside $\BD$, on $\BT$ and outside $\overline{\BD}$, respectively, then $\kappa=-\degr q_-$ and, up to a nonzero constant, $\om_-(z)=z^{-\kappa}/q_-(z)$, $\om_\circ(z)=1/q_\circ(z)$ and $\om_+(z)=1/q_+(z)$.\smallskip

\paragraph{Step 2} Define $P_1(z):=q(z)\Om(z)\in\cP^{m\times m}$ and factor $P_1$ as in the (extended) Smith decomposition of Theorem \ref{T:Smith}:
$$P_1(z) = E_1(z) D_1^-(z) D_1^\circ(z) D_1^+(z) F_1(z),$$
where $E_1$ and $F_1$ are matrix polynomials with nonzero constant determinants, and $D_1^-$, $D_1^\circ$ and $D_1^+$ are diagonal matrix polynomials, with roots only inside $\BD$, on $\BT$ and outside $\overline{\BD}$, respectively, with in all three the diagonal entries monic and ordered as in Theorem \ref{T:Smith}. Note that $D_1^+(z) F_1(z)$ is a matrix polynomial with all its roots outside $\overline{\BD}$, hence it is a plus function whose inverse is also a plus function.\smallskip

\paragraph{Step 3} Note that $E_1(z) D_1^-(z) D_1^\circ(z)$ is a polynomial that has all its zeroes in $\overline{\BD}$. Let $N=\degr E_1(z) D_1^-(z) D_1^\circ(z)$ and define
\[
P_2(z) :=z^N E_1(\mbox{$\frac{1}{z}$}) D_1^-(\mbox{$\frac{1}{z}$}) D_1^\circ(\mbox{$\frac{1}{z}$})\in\cP^{m\times m},
\]
so that
\[
P_1(z) = z^{N} P_2(\mbox{$\frac{1}{z}$}) D_1^+(z) F_1(z).
\]
Then the zeroes of $P_2$ can only be on $\BT$, outside $\overline{\BD}$ (except at $\infty$) or at 0. Next determine the Smith decomposition of Theorem \ref{T:Smith} for $P_2$:
\[
P_2(z) = E_2(z) D_2^-(z) D_2^\circ(z) D_2^+(z) F_2(z),
\]
where $E_2$ and $F_2$ are matrix polynomials with nonzero constant determinants, and $D_2^-$, $D_2^\circ$ and $D_2^+$ are diagonal matrix polynomials, with roots only inside $\BD$, on $\BT$ and outside $\overline{\BD}$, respectively, with in all three the diagonal entries monic and ordered as in Theorem \ref{T:Smith}.

Since 0 is the only root of $P_2$ in $\BD$, there exist $\rho_1\geq \rho_2 \geq \cdots \geq \rho_m\geq 0$ so that $D_2^-(z)=\diag(z^{\rho_1},\ldots,z^{\rho_m})$. Moreover, since $D_2^\circ$ is a diagonal matrix polynomial whose diagonal entries are monic polynomials with roots only on $\BT$, we can write $D_2^\circ(\frac{1}{z}) = \widetilde{D}_2^\circ(z) \widetilde{D}_2^-(\frac{1}{z})$ with $\wtil{D}_2^\circ,\wtil{D}_2^-\in\cP^{m\times m}$ diagonal matrix polynomials with $\wtil{D}_2^\circ$ having monic diagonal entries with roots only on $\BT$ and $\wtil{D}_2^-$ having roots only at zero. In fact, if $p_j$ is the $j$-th diagonal entry of $D_2^\circ$, then the $j$-th diagonal entry of $\wtil{D}_2^\circ$ is uniquely determined and given by $\frac{z^{\degr p_j}}{p_j(0)}p_j(\mbox{$\frac{1}{z}$})$, while the $j$-th diagonal entry of $\widetilde{D}_2^-(\frac{1}{z})$ is equal to $\frac{p_j(0)}{z^{\degr p_j}}$. In particular, $\wtil{D}_2^-(z)=\diag(p_1(0)^{-1}z^{\eta_1},\ldots,p_m(0)^{-1}z^{\eta_m})$ for integers $\eta_1\geq \eta_2\geq\cdots\geq\eta_m\geq 0$, since by construction $\degr p_j \geq \degr p_{j+1}$ for $j=1,\ldots,m-1$. We now obtain that
\begin{align}
  P_1(z)
& = z^N E_2(\mbox{$\frac{1}{z}$}) D_2^-(\mbox{$\frac{1}{z}$}) D_2^\circ(\mbox{$\frac{1}{z}$}) D_2^+(\mbox{$\frac{1}{z}$}) F_2(\mbox{$\frac{1}{z}$}) D_1^+(z)F_1(z) \notag\\
& = z^N E_2(\mbox{$\frac{1}{z}$})  D_2^+(\mbox{$\frac{1}{z}$}) D_2^\circ(\mbox{$\frac{1}{z}$}) D_2^-(\mbox{$\frac{1}{z}$}) F_2(\mbox{$\frac{1}{z}$}) D_1^+(z)F_1(z) \notag\\
& = z^N E_2(\mbox{$\frac{1}{z}$}) D_2^+(\mbox{$\frac{1}{z}$}) \wtil{D}_2^\circ(z)\wtil{D}_2^-(\mbox{$\frac{1}{z}$}) D_2^-(\mbox{$\frac{1}{z}$}) F_2(\mbox{$\frac{1}{z}$}) D_1^+(z)F_1(z).\label{P1fact1}
\end{align}
Since $E_2$ has a constant and nonzero determinant, for $z\neq 0$, $\det E_2(\frac{1}{z})$ is also constant and nonzero, so that $E_2(\frac{1}{z})\in\Rat^{m\times m}$ can only have zeroes and poles at 0. Hence $E_2(\frac{1}{z})$ is a minus function whose inverse is also a minus function. Furthermore, since $D_2^+(z)\in\cP^{m\times m}$ has only zeroes outside $\overline{\BD}$, the (diagonal) rational matrix function $D_2^+(\mbox{$\frac{1}{z}$})$ has zeroes only inside $\BD$ and can only have a pole at $0$. Thus $D_2^+(\mbox{$\frac{1}{z}$})$ is a minus function whose inverse is also a minus function. Hence the same conclusion holds for $E_2(\mbox{$\frac{1}{z}$}) D_2^+(\mbox{$\frac{1}{z}$})$.\smallskip


\paragraph{Step 4}
Let $K>0$ be the smallest integer such that
\[
P_3(z) := z^K \widetilde{D}_2^-(\mbox{$\frac{1}{z}$}) D_2^-(\mbox{$\frac{1}{z}$}) F_2(\mbox{$\frac{1}{z}$})\in\cP^{m\times m}.
\]
Note that $\det \widetilde{D}_2^-(z) D_2^-(\mbox{$\frac{1}{z}$}) F_2(\mbox{$\frac{1}{z}$})=c z^{-\xi}$ for some constant $c$ and with $\xi=\sum_{k=1}^m (\rho_k+ \eta_k)$, so that $\det P_3(z)=c z^{n}$, with $n:=m K-\xi$ being nonnegative by choice of $K$. Now apply Lemma \ref{L:fact_3} to $P_3$. It follows that we can write $P_3(z) = Q_3(z) F_3(z)$ with $Q_3,F_3\in\cP^{m\times m}$ with $F_3$ having a nonzero constant determinant and $Q_3$ lower triangular with $\det Q_3(z)=z^{n}$. Inserting this into \eqref{P1fact1} yields
\begin{align*}
  P_1(z)
& = z^{N-K} E_2(\mbox{$\frac{1}{z}$}) D_2^+(\mbox{$\frac{1}{z}$})\wtil{D}_2^\circ(z) Q_3(z) F_3(z)D_1^+(z)F_1(z).
\end{align*}
Using that $\Om(z)=q(z)^{-1}P_1(z)$ along with the factorization of $q^{-1}$ in Step 1, we obtain that
\begin{equation}\label{Omfact}
\Om(z)=z^{-k}\Om_-(z)\Om_\circ(z)P_0(z)\Om_+(z),
\end{equation}
where in case $N-K+\kappa\leq 0$ we set $k=-(N-K+\kappa)$ and define
\begin{equation}\label{Omfactors}
\begin{aligned}
\Om_-(z):= \om_-(z) E_2(\mbox{$\frac{1}{z}$}) D_2^+(\mbox{$\frac{1}{z}$}),&\quad
\Om_\circ(z):=\om_\circ(z)\wtil{D}_2^\circ(z),\\
P_0(z):=Q_3(z),
&\quad \Om_+(z):=\om_+(z)F_3(z)D_1^+(z)F_1(z),
\end{aligned}
\end{equation}
while if $N-K+\kappa> 0$ we set $k=0$, define $\Om_-$, $\Om_\circ$, $\Om_+$ as above and take $P_0(z)=z^{N-K+\kappa} Q_3(z)$.

Since both $\om_-(z)$ and $E_2(\mbox{$\frac{1}{z}$}) D_2^+(\mbox{$\frac{1}{z}$})$ are minus function whose inverses are minus functions, the same is true for $\Om_-(z)$. That $P_0(z)$ has the required form follows directly from Lemma \ref{L:fact_3}. It is also straightforward from the construction that $\Om_\circ(z)$ is a diagonal matrix whose diagonal entries are scalar rational functions with poles and zeroes only on $\BT$. Finally, all factors of $\Om_+(z)$ are plus functions whose inverses are also plus functions, hence $\Om_+(z)$ also has this property. We conclude that the factorization \eqref{Omfact}--\eqref{Omfactors} of $\Om$ has the required properties.
\end{proof}

\begin{proof}[\bf Proof of Theorem \ref{T:factUnique}]
Upon inspection of the proof of Theorem \ref{T:fact}, and specifically \eqref{Omfactors} using the definitions of $\om_-$, $\om_\circ$ and $\om_+$ from Step 1 of the proof, it follows that $P_+$ and $P_-$ in \eqref{P+DcircP-}, that is, $P_+(z)=F_3(z)D_1^+(z)F_1(z)$ and $P_-(z)=E_2(z)D_2^+(z)$,  are matrix polynomials with the required properties. It also follows that for $D_\circ$ in \eqref{P+DcircP-}, that is, $D_\circ(z)=\wtil{D}_2^\circ(z)$ as defined in Step~3 of the proof, the required ordering of the diagonal entries carries over from the corresponding ordering of $D_2^\circ$ from which $\wtil{D}_2^\circ$ is constructed. It is also clear from the construction, based on Lemma \ref{L:fact_3}, that we may take $P_0$ to have the required form. Finally, assume $k>0$, that is, $N-K+\kappa<0$. In case $P_0(0)=0$, we can write $P_0(z)=z^j\what{P}_0(z)$ for some $j>0$ and $\what{P}_0\in\cP^{m\times m}$ with $\what{P}_0(0)\neq 0$. In case $k\geq j$, replace $k$ by $k-j$ and $P_0$ by $\what{P}_0$ and in case $k<j$, replace $k$ by 0 and $P_0$ by $z^{-k}P_0(z)=z^{j-k}\what{P}_0(z)$. In both cases the adjusted $k$ and $P_0$ have the required relation, while the structure of $P_0$ is maintained. It follows that the factorization obtained from the construction in the proof of Theorem \ref{T:fact}, with the small modifications described here, has the required form.

Let \eqref{WHfact1} be a factorization of $\Om$ satisfying the conditions of Theorem \ref{T:fact} as well as the additional conditions of Theorem \ref{T:factUnique}. where $P_+$, $D_\circ$ and $P_-$ are defined as in \eqref{P+DcircP-}. Assume that a second factorization of this type exists:
\begin{equation}\label{SecFact}
\Om(z) = z^{-\wtil{k}}\wtil{\Om}_-(z) \wtil{\Om}_\circ(z) \wtil{P}_0(z) \wtil{\Om}_+(z),
\end{equation}
and write $\wtil{P}_+$, $\wtil{D}_\circ$ and $\wtil{P}_-$ for the matrix polynomials constructed via \eqref{P+DcircP-} for the factorization \eqref{SecFact}. It then follows that
\[
z^{-k} P_-(\mbox{$\frac{1}{z}$})D_\circ(z)P_0(z)P_+(z) = z^{-\wtil{k}} \wtil{P}_-(\mbox{$\frac{1}{z}$})\wtil{D}_\circ(z)\wtil{P}_0(z)\wtil{P}_+(z).
\]
Hence, for each integer $L\geq 0$ we have
\begin{align*}
& z^{\wtil{k}+L} \det(P_-(\mbox{$\frac{1}{z}$}))D_\circ(z)P_0(z)P_+(z)(\det(\wtil{P}_+(z)) \wtil{P}_+(z)^{-1})
=  \\
&\qquad = z^{k+L}(\det(P_-(\mbox{$\frac{1}{z}$})) P_-(\mbox{$\frac{1}{z}$})^{-1})
 \wtil{P}_-(\mbox{$\frac{1}{z}$})\wtil{D}_\circ(z)\wtil{P}_0(z)\det(\wtil{P}_+(z)).
\end{align*}
Now choose $L$ large enough so that both  $z^{k+L}(\det(P_-(\mbox{$\frac{1}{z}$})) P_-(\mbox{$\frac{1}{z}$})^{-1})
\wtil{P}_-(\mbox{$\frac{1}{z}$})$ and $z^{\wtil{k}+L} \det(P_-(\mbox{$\frac{1}{z}$}))$ are polynomials, so that on both sides of the identity we have a factorization of polynomials. Now observe that all four polynomials
\begin{align*}
z^{\wtil{k}+L} \det(P_-(\mbox{$\frac{1}{z}$})), &\quad  P_0(z)P_+(z)(\det(\wtil{P}_+(z)) \wtil{P}_+(z)^{-1}), \\
z^{k+L}(\det(P_-(\mbox{$\frac{1}{z}$})) P_-(\mbox{$\frac{1}{z}$})^{-1})
 \wtil{P}_-(\mbox{$\frac{1}{z}$}), &\quad \wtil{P}_0(z)\det(\wtil{P}_+(z)),
\end{align*}
do not have roots on $\BT$. Since the diagonal entries of $D_\circ$ and $\wtil{D}_\circ$ are ordered as stated in Theorem \ref{T:factUnique}, it now follows from the uniqueness claim of Theorem \ref{T:SmithReg} that both $D_\circ$ and $\wtil{D}_\circ$ are equal to the Smith form of the above polynomial with respect to $\BT$. In particular, $D_\circ$ and $\wtil{D}_\circ$ coincide.
%
%
\end{proof}

\begin{corollary}\label{C:SFP1}
Let $\Om\in\Rat^{m\times m}$ factor as \eqref{WHfact1} with the factors as in Theorem \ref{T:fact} satisfying the additional conditions of Theorem \ref{T:factUnique}. Let $q$ be the least common multiple of the denominators of the matrix entries of $\Om$. Then $D_\circ$ defined in \eqref{P+DcircP-} is equal to the Smith form of $P_1(z):= q(z)\Om(z)\in\cP^{m\times m}$ with respect to $\BT$.
\end{corollary}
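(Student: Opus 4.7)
The plan is to use \eqref{P+DcircP-} to rewrite $P_1(z):=q(z)\Om(z)$ as a product in which $D_\circ$ is sandwiched between matrix polynomials that are invertible at every point of $\BT$, then apply Corollary~\ref{C:SmithMult} with $\La=\BT$ to reduce the Smith form of $P_1$ with respect to $\BT$ to that of $D_\circ$, and finally observe that $D_\circ$ is already in Smith form with respect to $\BT$.

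Concretely, write $q=q_-q_\circ q_+$ as in Theorem~\ref{T:factUnique}, set $\ell:=\degr P_-$, and define $\wtil{P}_-(z):=z^{\ell}P_-(\tfrac{1}{z})\in\cP^{m\times m}$. Solving \eqref{P+DcircP-} for $\Om_-,\Om_\circ,\Om_+$ and substituting into \eqref{WHfact1}, the scalar denominators $q_-,q_\circ,q_+$ cancel against $q$ and one obtains
\[
P_1(z)=z^{c}\,\wtil{P}_-(z)\,D_\circ(z)\,P_0(z)\,P_+(z),\qquad c:=-k+\degr q_--\ell.
\]
Multiplying both sides by $z^{\max(0,-c)}$ yields a genuine polynomial identity; the extra scalar factor $z^{\max(0,-c)}I_m$ has its only root at $0$, hence does not affect the Smith form with respect to $\BT$.

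The next step is to verify that both outer factors are matrix polynomials which are invertible on all of $\BT$. Since $P_-$ has no roots in $\overline{\BD}$ by Theorem~\ref{T:factUnique}, the roots of $\wtil{P}_-$ are their reciprocals and therefore lie in $\BD$, so together with the $z$-prefactor the left outer factor is invertible on $\BT$. On the right, $\det P_0(z)=z^{N}$ forces all roots of $P_0$ to be at $0$, while $P_+$ has no roots in $\overline{\BD}$ by hypothesis, so $P_0\,P_+$ is likewise invertible on $\BT$. Corollary~\ref{C:SmithMult}, applied to these outer factors with $\La=\BT$, then gives that $P_1$ and $D_\circ$ have the same Smith form with respect to $\BT$.

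It remains to check that $D_\circ$ is its own Smith form with respect to $\BT$. By Theorem~\ref{T:factUnique}, $D_\circ=\diag_{j=1}^m(p^\circ_j)$ is diagonal with the division property $p^\circ_{j+1}\mid p^\circ_j$; moreover, the roots of each $p^\circ_j$ lie on $\BT$ because $D_\circ=q_\circ\,\Om_\circ$ and both $q_\circ$ and the entries of $\Om_\circ$ have poles and zeroes only on $\BT$. The only point that takes a moment is to argue that each $p^\circ_j$ is monic, which I expect to read off from the construction $D_\circ=\wtil{D}_2^\circ$ in Step~3 of the proof of Theorem~\ref{T:fact}: there the $j$-th diagonal entry is $\wtil{p}_j(z)=z^{\degr p_j}p_j(\tfrac{1}{z})/p_j(0)$, visibly monic since the $p_j$ coming from the Smith decomposition are monic. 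With monicity in hand, the uniqueness clause of Theorem~\ref{T:SmithReg} identifies $D_\circ$ as the Smith form of $P_1$ with respect to $\BT$. The only real obstacle is the bookkeeping of the $z$-powers, which is why I chose to absorb them into the scalar prefactor $z^{\max(0,\pm c)}$ up front.
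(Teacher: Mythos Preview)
Your proof is correct and takes a genuinely different route from the paper's.

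The paper first invokes the uniqueness in Theorem~\ref{T:factUnique} to reduce to the specific factorization produced by the construction, and then traces that construction step by step: it identifies $D_\circ=\wtil{D}_2^\circ$ and $D_1^\circ$ as the Smith form of $P_1$ with respect to $\BT$, and shows $\wtil{D}_2^\circ=D_1^\circ$ by passing through the intermediate polynomial $P_2$ and appealing to Lemma~\ref{L:invariantfact} (the reciprocal-roots lemma) together with Corollary~\ref{C:SmithMult}. Your argument instead works with an \emph{arbitrary} factorization satisfying the hypotheses, rewrites $P_1$ directly as a sandwich $M\,D_\circ\,N$ with $M,N$ matrix polynomials invertible on $\BT$, and applies Corollary~\ref{C:SmithMult} once. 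This is cleaner: it bypasses Lemma~\ref{L:invariantfact} and the two-stage Smith decomposition entirely, and is in spirit the same manoeuvre the paper already uses at the end of the proof of Theorem~\ref{T:factUnique} (where two $D_\circ$'s are compared by sandwiching). The only place you still need the construction is the monicity of the $p_j^\circ$, which is indeed not listed among the ``additional conditions'' of Theorem~\ref{T:factUnique}; your appeal to uniqueness plus the explicit formula for $\wtil{D}_2^\circ$ handles this correctly. Two small cosmetic points: your statement that ``the roots of $\wtil{P}_-$ are their reciprocals'' should allow for possible extra roots at $0$ (harmless for invertibility on $\BT$), and the hedge ``I expect to read off'' could be stated more assertively, since the identification $D_\circ=\wtil{D}_2^\circ$ is exactly what the paper records in the first paragraph of its proof of Theorem~\ref{T:factUnique}.
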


\begin{proof}[\bf Proof]
By the uniqueness claim of Theorem \ref{T:factUnique} and the fact that the construction in the proof of Theorem \ref{T:fact} leads to a factorization that satisfies the additional conditions of Theorem \ref{T:factUnique}, it suffices to prove that the factor $D_\circ$ in \eqref{P+DcircP-} obtained from the construction of Theorem \ref{T:fact} coincides with the Smith form of $P_1$ with respect to $\BT$. In this case, $D_\circ(z)=\wtil{D}_2^\circ(z)$ as constructed in Step 3 of the proof of Theorem \ref{T:fact}, while $D_1^\circ$ from Step 2 of the proof is the Smith form of $P_1$ with respect to $\BT$. Hence we need to show that $D_1^\circ=\wtil{D}_2^\circ$.

We now follow the various steps of the construction of $\wtil{D}_2^\circ$ in the proof of Theorem \ref{T:fact}. Set $\wtil{P}_2(z):=E_1(z)D_1^-(z)D_1^\circ(z)\in\cP^{m\times m}$. Since $P_1(z)=\wtil{P}_2(z)D_1^+(z)F_1(z)$ and $D_1^+(z)F_1(z)\in\cP^{m\times m}$ is invertible for all $z\in\BT$, it follows from Corollary \ref{C:SmithMult} that $D_1^\circ$ is also the Smith form of $\wtil{P}_2$ with respect to $\BT$. Note that $P_2$ defined in Step 2 is given by $P_2(z)=z^{\degr \wtil{P}_2} \wtil{P}_2(\mbox{$\frac{1}{z}$})$, and that $D_2^\circ$ in Step 2 is the Smith form of $P_0$ with respect to $\BT$. The relation between $D_1^\circ$ and $D_2^\circ$ follows from Lemma \ref{L:invariantfact}. Say $D_1^\circ(z)=\diag(d_1^\circ(z),\ldots,d_m^\circ(z))$ and $D_2^\circ(z)=\diag(p_1^\circ(z),\ldots,p_m^\circ(z))$ with $d_j^\circ,p_j^\circ\in\cP$ monic and with roots only on $\BT$, for $j=1,\ldots,m$. Then $d_j^\circ(0)\neq 0$, $\degr d^\circ_j=\degr p^\circ_j$ and $p_j^\circ(z)=\frac{z^{\degr d^\circ_j}}{d^\circ_j(0)}d^\circ_j(\frac{1}{z})$. On the other hand, the $j$-th diagonal entry of $\wtil{D}_2^\circ$ is given by
\[
\frac{z^{\degr p^\circ_j}}{p^\circ_j(0)}p^\circ_j(\mbox{$\frac{1}{z}$})
=\frac{z^{\degr p^\circ_j}}{p^\circ_j(0)} \frac{z^{-\degr d^\circ_j}}{d^\circ_j(0)}d^\circ_j(z)
=\frac{1}{p^\circ_j(0)d^\circ_j(0)}d^\circ_j(z)=d^\circ_j(z),
\]
using $\degr p_j^\circ=\degr d_j^\circ$ in the first identity and the fact that both $d_j^\circ$ and the resulting polynomial are monic (so that $p^\circ_j(0)d^\circ_j(0)$ must be 1) in the second identity. Consequently, $d_j^\circ$ is the $j$-th diagonal entry of $\wtil{D}_2^\circ$, and thus $\wtil{D}_2^\circ=D_1^\circ$, as claimed.
\end{proof}

\begin{corollary}\label{C:fact1}
Let $\Om\in\Rat^{m\times m}$ with $\det \Om(z) \not\equiv 0$ and suppose
\[\Om(z) = z^{-k}\Om_-(z) \Om_\circ(z) P_0(z) \Om_+(z)\] is the factorization of $\Om$ as in Theorem \ref{T:fact}. Then the zeroes and poles of $\Om$ on $\BT$ correspond to the zeroes and poles of $\psi_j$ where $\Om_\circ(z) = \diag \left (\psi_j(z)\right )_{j=1}^m$.
\end{corollary}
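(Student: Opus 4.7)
My plan is to show that in the factorization $\Om(z) = z^{-k}\Om_-(z)\Om_\circ(z)P_0(z)\Om_+(z)$, every factor except $\Om_\circ$ is analytic and invertible in a neighborhood of each point $z_0 \in \BT$, so that the local pole/zero structure of $\Om$ on $\BT$ is completely controlled by $\Om_\circ$.

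First I would fix $z_0 \in \BT$ and check the four outer factors one by one. The scalar $z^{-k}$ is clearly holomorphic and nonzero at $z_0$. By Theorem \ref{T:fact}, $\Om_+$ is a plus function and $\Om_+^{-1}$ is also a plus function, meaning neither has poles in $\ov\BD$; so $\Om_+$ is holomorphic at $z_0$ with invertible value there. Similarly, $\Om_-$ and $\Om_-^{-1}$ are minus functions and hence have no poles outside $\BD$ (point at infinity included), so $\Om_-$ is holomorphic and invertible at $z_0$. Finally, $P_0 \in \cP^{m\times m}$ is a polynomial, hence everywhere holomorphic, and $\det P_0(z) = z^N$ is nonzero on $\BT$; therefore $P_0(z_0)$ is invertible and $P_0^{-1}$ is also holomorphic at $z_0$.

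Next, writing $A(z) := z^{-k}\Om_-(z)$ and $B(z) := P_0(z)\Om_+(z)$, both $A$ and $B$ are holomorphic with invertible values on all of $\BT$, as are $A^{-1}$ and $B^{-1}$. Therefore, for any point $z_0 \in \BT$ and any entry or minor, the Laurent expansion of $\Om = A\,\Om_\circ\,B$ at $z_0$ has the same order of pole (or of zero) as that of $\Om_\circ$: multiplying by matrix-valued factors that are holomorphic and invertible at $z_0$ cannot create or cancel a pole or a zero at $z_0$. Formally, $\Om$ has a pole at $z_0$ iff some entry blows up at $z_0$ iff $\Om_\circ$ has an entry blowing up at $z_0$; and $\Om^{-1} = B^{-1}\Om_\circ^{-1}A^{-1}$ has a pole at $z_0$ iff $\Om_\circ^{-1}$ does, which is the definition of $z_0$ being a zero of $\Om$ (resp.\ of $\Om_\circ$).

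Finally, since $\Om_\circ = \diag(\psi_1,\ldots,\psi_m)$, a point $z_0 \in \BT$ is a pole of $\Om_\circ$ precisely when it is a pole of some $\psi_j$, and a zero of $\Om_\circ$ (i.e.\ a pole of $\Om_\circ^{-1} = \diag(\psi_1^{-1},\ldots,\psi_m^{-1})$) precisely when it is a zero of some $\psi_j$. Combining this with the previous paragraph yields the claim. There is no real obstacle here; the only subtle point is being careful that "zero of $\Om$" is defined via poles of $\Om^{-1}$, so one must separately verify the invertibility (not just holomorphy) of the outer factors at points of $\BT$, which is exactly what the plus/minus hypotheses on $\Om_\pm^{\pm1}$ together with $\det P_0(z) = z^N$ on $\BT$ provide.
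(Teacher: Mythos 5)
Your proof is correct and follows exactly the argument the paper leaves implicit (the corollary is stated without proof as an immediate consequence of Theorem \ref{T:fact}): the factors $z^{-k}$, $\Om_-$, $P_0$, $\Om_+$ and their inverses are all analytic at every point of $\BT$, so the poles of $\Om$ and of $\Om^{-1}$ on $\BT$ coincide with those of $\Om_\circ$ and $\Om_\circ^{-1}$, i.e.\ with the poles and zeroes of the diagonal entries $\psi_j$. Your closing remark about needing invertibility (via $\Om_\pm^{\pm 1}$ and $\det P_0(z)=z^N$) and not just analyticity of the outer factors is precisely the right point of care.
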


%

\section{An example}\label{S:ExampleM}

In this section we present an example illustrating the factorization procedure. Consider
\[
\Om(z) =
\begin{pmatrix} 1 & \frac{1}{z-1} \\ 0 & 1 \end{pmatrix}.
\]
Then $q(z) = z-1$ and we have $q(z)^{-1}=(z-1)^{-1}=z^0 \om_-(z)\om_\circ(z)\om_+(z)$ with $\om_-(z)=\om_+(z)=1$ and $\om_\circ(z)=(z-1)^{-1}$. The Smith decomposition of $P_1(z)=q(z)\Om(z) = \sbm{ z-1 & 1 \\ 0 & z-1}$ is given by
\begin{align*}
P_1(z) & = E_1(z) D_1(z) F_1(z)
 = E_1(z) D_1^-(z) D_1^\circ(z) D_1^+(z) F_1(z)\\
& = \begin{pmatrix} 0 & 1 \\ -1 & z-1 \end{pmatrix}
\begin{pmatrix} (z-1)^2 & 0 \\ 0 & 1 \end{pmatrix}
\begin{pmatrix} 1 & 0 \\ z-1 & 1 \end{pmatrix}
\end{align*}
with $D_1^-(z) = D_1^+(z) = I_2 $. Then $E_1(z) D_1^-(z) D_1^\circ(z)\! =\! \sbm{0 & 1 \\ -(z-1)^2 & z-1}$, and so
\begin{align*}
P_2(z) &= z^N E_1(\mbox{$\frac{1}{z}$}) D_1^-(\mbox{$\frac{1}{z}$}) D_1^\circ(\mbox{$\frac{1}{z}$})\\
&= z^2 \begin{pmatrix} 0 & 1 \\ -(\frac{1}{z} - 1)^2 & \frac{1}{z} - 1 \end{pmatrix} =
\begin{pmatrix} 0 & z^2 \\ -(z - 1)^2 & z - z^2 \end{pmatrix}.
\end{align*}
The Smith decomposition of $P_2$ is given by
\begin{align*}
&P_2(z)  =  E_2(z) D_2(z) F_2(z) \\
&\quad  =
\begin{pmatrix} -z-1 & -z^2 \\ 1 & z-1 \end{pmatrix}
\begin{pmatrix} (z-1)^2 z^2 & 0 \\ 0 & 1 \end{pmatrix}
\begin{pmatrix} -1 & -1 \\ (z-1)^2 (z+1) & z(z^2 - z -1)\end{pmatrix}.
\end{align*}
Hence
\[
D_2^-(z) = \begin{pmatrix} z^2 & 0 \\ 0 & 1 \end{pmatrix}, \quad
D_2^\circ(z) = \begin{pmatrix} (z-1)^2 & 0 \\ 0 & 1\end{pmatrix}\quad \textrm{and}\quad D_2^+(z) = I_2.
\]
Put
\[
D_2^\circ(\mbox{$\frac{1}{z}$})  =
\begin{pmatrix} \left(\frac{1}{z} -1 \right)^2 & 0 \\ 0 & 1 \end{pmatrix}
= \begin{pmatrix} (1-z)^2 & 0 \\ 0 & 1 \end{pmatrix}
\begin{pmatrix} z^{-2} & 0 \\ 0 & 1 \end{pmatrix}=\widetilde{D}_2^\circ(z) \widetilde{D}_2^-(\mbox{$\frac{1}{z}$}).
\]
Then
\begin{align*}
P_3(z) & =  z^K \widetilde{D}_2^-(z) D_2^- (\mbox{$\frac{1}{z}$}) F_2(\mbox{$\frac{1}{z}$}) \\
& =
z^4 \begin{pmatrix} z^{-2} & 0 \\ 0 & 1\end{pmatrix}
\begin{pmatrix} z^{-2} & 0 \\ 0 & 1\end{pmatrix}
\begin{pmatrix} -1 & -1 \\ \left(\frac{1}{z} - 1\right)^2 \left(\frac{1}{z} + 1\right ) & \frac{1}{z} \left( \frac{1}{z^2} - \frac{1}{z} -1 \right ) \end{pmatrix},
\end{align*}
from which it follows that
\[
P_3(z) = \begin{pmatrix} -1 & -1 \\ z(1 - z)^2(1+z) & z(1 - z - z^2) \end{pmatrix}\quad\mbox{ and thus } \det P_3(z) = z^4.
\]
The above computations conclude Steps 1, 2 and 3 of the procedure in the proof of Theorem \ref{T:fact}. To conclude Step 4 we have to apply the recursive procedure from the proof of Lemma 3.2 to factor $P_3$.

In the first step we factor
\[
P_3(z) = Q_1(z) R_1(z) = \begin{pmatrix} 1 & 0 \\ 0 & z\end{pmatrix}
\begin{pmatrix} -1 & -1 \\ z^3 - z^2 - z + 1 & 1 - z - z^2 \end{pmatrix}.
\]
The sum of the diagonal multiplicities is $1$ which is less than $4$. Note that $r_1(0)=(-1 \ -1)$ and $r_2(0) = (1 \ 1)$. Hence we find $L_1 =\sbm{ 1 & 0 \\ 1 & 1}$. Then
\begin{align*}
P_3(z) & = Q_1(z) L_1^{-1} L_1 R_1(z)  \\
& =   \begin{pmatrix} 1 & 0 \\ 0 & z\end{pmatrix}
\begin{pmatrix} 1 & 0 \\ -1 & 1 \end{pmatrix}
\begin{pmatrix} 1 & 0 \\ 1 & 1\end{pmatrix}
\begin{pmatrix} -1 & -1 \\ z^3 - z^2 - z + 1 & 1 - z - z^2 \end{pmatrix} \\
& = \begin{pmatrix} 1 & 0 \\ 0 & z\end{pmatrix}
\begin{pmatrix} 1 & 0 \\ -1 & 1 \end{pmatrix}
\begin{pmatrix} -1 & -1 \\ z^3 - z^2 - z  &  - z - z^2 \end{pmatrix} \\
& =
\begin{pmatrix} 1 & 0 \\ -z & 1 \end{pmatrix}
\begin{pmatrix} 1 & 0 \\ 0 & z\end{pmatrix}
\begin{pmatrix} 1 & 0 \\ 0 & z\end{pmatrix}
\begin{pmatrix} -1 & -1 \\ z^2 - z - 1 &  - 1 - z \end{pmatrix}\\
& = \begin{pmatrix} 1 & 0 \\ -z & z^2 \end{pmatrix}
\begin{pmatrix} -1 & -1 \\ z^2 - z - 1 &  - 1 - z \end{pmatrix} = Q_2(z) R_2(z).
\end{align*}
The sum of the diagonal multiplicities is $2$, less than $4$. The rows of $R_2(0)$ are $r_1'(0)= r_2'(0)= (-1 \ -1)$. Hence we take $L_2 =\sbm{1 & 0 \\ -1 & 1 }$. Then
\begin{align*}
P_3(z) & = Q_2(z) R_2(z) = Q_2(z) L_2^{-1} L_2 R_2(z) \\
& = \begin{pmatrix} 1 & 0 \\ -z & z^2 \end{pmatrix}
\begin{pmatrix} 1 & 0 \\ 1 & 1\end{pmatrix}
\begin{pmatrix} 1 & 0 \\ -1 & 1\end{pmatrix}
\begin{pmatrix} -1 & -1 \\ z^2 - z - 1 &  - 1 - z \end{pmatrix}\\
& = \begin{pmatrix} 1 & 0 \\ -z & z^2 \end{pmatrix}
\begin{pmatrix} -1 & -1 \\ z^2 - z  &   - z \end{pmatrix}
= \begin{pmatrix} 1 & 0 \\ -z & z^2 \end{pmatrix}  \begin{pmatrix} 1 & 0 \\ 0 & z\end{pmatrix}
\begin{pmatrix} -1 & -1 \\ z - 1  &   -1 \end{pmatrix}\\
& = \begin{pmatrix} 1 & 0 \\ z^2 -z & z^3 \end{pmatrix}
\begin{pmatrix} -1 & -1 \\ z - 1  &   -1 \end{pmatrix} = Q_3(z) R_3(z).
\end{align*}
The sum of the diagonal multiplicities is 3, still less than 4, and so we apply the procedure one more time. Now $r_1''(0)=r_2''(0) ( -1\ -1)$ and so we have $L_3 =\sbm{1 & 0 \\ -1 & 1}$. Then
\begin{align*}
P_3(z) & = Q_3(z) R_3(z) =Q_3(z)L_3^{-1} L_3 R_3(z)\\
& = \begin{pmatrix} 1 & 0 \\ z^2 -z & z^3 \end{pmatrix}
\begin{pmatrix} 1 & 0 \\ 1 & 1 \end{pmatrix}
\begin{pmatrix} 1 & 0 \\ -1 & 1\end{pmatrix}
\begin{pmatrix} -1 & -1 \\ z - 1  &   -1 \end{pmatrix}\\
& = \begin{pmatrix} 1 & 0 \\ z^3 + z^2 -z & z^3 \end{pmatrix}
\begin{pmatrix} -1 & -1 \\ z   &   0 \end{pmatrix}\\
& = \begin{pmatrix} 1 & 0 \\ z^3 + z^2 -z & z^4 \end{pmatrix}
\begin{pmatrix} -1 & -1 \\  1  &   0 \end{pmatrix} = Q_4(z) R_4(z).
\end{align*}
This provides the factorization of $P_3$ from Lemma \ref{L:fact_3}. We have now computed all required matrix functions for the factorization of $\Om$ in \eqref{Omfact}--\eqref{Omfactors}. This yields
\[
\Om(z)=z^{N-K+\kappa}\Om_-(z)\Om_\circ(z)P_0(z)\Om_+(z),
\]
with $N-K+\kappa=2-4+0=-2$ and where
\begin{align*}
\Om_-(z) & = \om_-(z) E_2(\mbox{$\frac{1}{z}$}) D_2^+(\mbox{$\frac{1}{z}$})=E_2(\mbox{$\frac{1}{z}$})
=\begin{pmatrix} -\frac{1}{z}-1 & -\frac{1}{z^2} \\ 1 & \frac{1}{z}-1 \end{pmatrix},\\
\Om_\circ(z) & = \om_\circ(z) \wtil{D}_2^\circ(z)=\frac{1}{z-1} \begin{pmatrix} (1-z)^2 & 0 \\ 0 & 1 \end{pmatrix}
=\begin{pmatrix} z-1 & 0 \\ 0 & \frac{1}{z-1} \end{pmatrix},\\
\Om_+(z) &=\om_+(z)R_4(z)D_1^+(z)F_1(z)=R_4(z)F_1(z)=\begin{pmatrix} -z & -1 \\ 1 & 0 \end{pmatrix},\\
&\quad \mbox{and}\quad P_0(z)=Q_4(z)=\begin{pmatrix} 1 & 0 \\ z^3 + z^2 -z & z^4 \end{pmatrix}.
\end{align*}

\section{Factorization of the Toeplitz operator}\label{S:FactTM}

In this section we prove Theorem \ref{T:Fact_Toeplitz} by using the Wiener-Hopf type factorization of Theorem \ref{T:fact}. We first prove some technical lemmas.

\begin{lemma}\label{L:WH_a}
Suppose that $\Om,U,V\in\Rat^{m\times m}$ with $U$ a minus function whose inverse is a minus function and $V$ a plus fuction. Then $T_{\Om V} = T_\Om T_V$ and $T_{U\Om} = T_U T_\Om$.
\end{lemma}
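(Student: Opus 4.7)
My plan is to verify each identity by matching domains first and then comparing actions on shared elements, exploiting the interplay between plus/minus rational functions and the Riesz projection $\BP$. For $T_{\Om V} = T_\Om T_V$ this is almost tautological: since $V$ has no poles in $\overline{\BD}$, $V\in H^\infty_{m\times m}$ and multiplication by $V$ maps $H^p_m$ boundedly into itself, so $T_V f = Vf$ and $\Dom(T_V)=H^p_m$. Then $f\in\Dom(T_{\Om V})$ iff $\Om(Vf)$ admits a decomposition $h+r$ with $h\in L^p_m$ and $r\in\Rat_0^m(\BT)$, which is precisely the condition $Vf\in\Dom(T_\Om)$, i.e., $f\in\Dom(T_\Om T_V)$; on this common domain both operators produce $\BP h$.

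For $T_{U\Om} = T_U T_\Om$ I expect the main work. Since both $U$ and $U^{-1}$ are minus functions, each is rational with no poles outside $\overline{\BD}$ (with $\infty$ included), hence bounded on $\BT$ and, viewed on $\BT$, anti-analytic. To show $\Dom(T_\Om)\subseteq\Dom(T_{U\Om})$, for $f\in\Dom(T_\Om)$ with $\Om f = h+r$ I would expand $U\Om f = Uh + Ur$: the term $Uh$ is in $L^p_m$ since $U$ is bounded on $\BT$, while a partial fraction argument on $Ur$ shows that its only possible poles lie in $\BD$ (from $U$) or on $\BT$ (from $r$), and that it is strictly proper at $\infty$ since $r$ is and $U(\infty)$ is finite. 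Hence $Ur = v_\BD + v_\BT$ with $v_\BD$ strictly proper with poles only in $\BD$ (so in $L^p_m$) and $v_\BT\in\Rat_0^m(\BT)$, so $U\Om f\in L^p_m+\Rat_0^m(\BT)$ and $T_{U\Om}f = \BP(Uh+v_\BD)$.

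The key identity is then $\BP(Uh+v_\BD) = \BP(U\BP h) = T_U T_\Om f$. For $\BP v_\BD = 0$, I would note that $v_\BD$ is strictly proper with poles only in $\BD$, so its Laurent expansion at $\infty$ has only strictly negative powers and $v_\BD$ lies in $(I-\BP)L^p_m$. For $\BP(Uh) = \BP(U\BP h)$, splitting $h = \BP h + (I-\BP)h$ and using that $U$ has only non-positive Fourier modes while $(I-\BP)h$ has only strictly negative ones, the product $U(I-\BP)h$ has only strictly negative Fourier modes and is annihilated by $\BP$. The reverse inclusion $\Dom(T_{U\Om})\subseteq\Dom(T_\Om)$ follows by multiplying $U\Om f = h'+r'$ by $U^{-1}$: the term $U^{-1}h'$ lies in $L^p_m$ since $U^{-1}$ is bounded on $\BT$, and the identical partial fraction argument applied to $U^{-1}r'$ shows it decomposes in $L^p_m+\Rat_0^m(\BT)$. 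This is where the hypothesis that $U^{-1}$ is also a minus function enters crucially.

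The main technical point to handle carefully is the partial fraction decomposition of $Ur$ together with the verification that the in-disc part $v_\BD$ is annihilated by $\BP$; both rely essentially on $U$ being a minus function. The matrix-valued setting introduces no real additional difficulty, since all Fourier-mode arguments apply entrywise and the projection $\BP$ acts componentwise on $L^p_m$.
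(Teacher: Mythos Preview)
Your proposal is correct and follows essentially the same approach as the paper's proof: both arguments handle $T_{\Om V}=T_\Om T_V$ via $T_V f=Vf$ and domain matching, and both handle $T_{U\Om}=T_U T_\Om$ by using the partial-fraction splitting of $U\rho$ (or $U^{-1}\rho$) into an in-disc piece annihilated by $\BP$ plus a piece in $\Rat_0^m(\BT)$, together with the identity $\BP U(I-\BP)=0$ for minus functions. The only cosmetic difference is the order in which the two domain inclusions are established and from which side the action is computed.
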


\begin{proof}[\bf Proof]
Let $f\in \Dom(T_{\Om V})$. Then $\Om Vf=h+\rho$ for $h\in L_m^p $ and $\rho\in \Rat_0^m(\BT)$ and $T_{\Om V}f=\BP h$. Since $V$ has no poles in $\overline{\BD}$, $V$ is analytic and bounded on $\BD$. Therefore, $T_V$ is bounded and $T_V f= Vf$. Put $g=T_V f=Vf$. Then by the above relation we have $g\in \Dom(T_\Om)$ and $T_\Om g=\BP h$. In other words, $T_\Om T_Vf=T_{\Om V}f$. So $T_V$ maps $\Dom(T_{\Om V})$ into $\Dom(T_\Om)$ and on $\Dom (T_{\Om V})$ the operators $T_{\Om V}$ and $T_\Om T_V$ coincide. To see that $T_{\Om V}=T_\Om T_V$, suppose that $u\in \Dom(T_\Om T_V)$. Since $V$ is bounded and analytic on $\BD$, we have $Vu=T_Vu\in \Dom(T_\Om)$, that is, $\Om Vu=w+\eta$ for some $w \in L_m^p $ and $\eta \in\Rat_0^m(\BT)$, and so $u\in \Dom(T_{\Om V})$. This proves that
$\Dom(T_{\Om} T_V)=\Dom(T_{\Om V})$, and hence that $T_{\Om} T_V = T_{\Om V}$.

Next we prove that $T_{U\Om} = T_U T_\Om$. Let $f\in \Dom(T_{U\Om})$. Then $U\Om f=h+\rho$ with $h\in L_m^p$ and $\rho\in\Rat_0(\BT)$. Hence $\Om f=U^{-1}h+U^{-1}\rho$. Since $U^{-1}$ is minus function, it is analytic outside $\overline{\BD}$ and thus $U^{-1}\rho$ can be written as $h_1+\rho_1$ with $h_1$ a rational vector function in $L_m^p$ with poles only in the unit disc and $\rho_1\in\Rat_0^m(\BT)$. In particular, $\BP h_1 = 0$. So $\Om f=U^{-1}h+h_1+\rho_1$, which shows that $f\in \Dom(T_\Om)$ and $T_\Om f=\BP(U^{-1}h+h_1)=\BP(U^{-1}h)$. Since $U$ is a minus function, we have $\BP U(I-\BP) (U^{-1}h)=0$. Therefore, we find that
\begin{align*}
T_UT_\Om f
&=T_U \BP(U^{-1}h)=\BP U\BP(U^{-1}h)
=\BP U\BP(U^{-1}h)+\BP U(I-\BP)(U^{-1}h)\\
&=\BP UU^{-1}h=\BP h=T_{U\Om}f.
\end{align*}
We proved that $\Dom (T_{U\Om})\subset \Dom(T_\Om)=\Dom(T_U T_\Om)$ and that $T_{U \Om}$ and $T_U T_V$ coincide on $\Dom (T_{U\Om})$. It remains to prove $\Dom(T_U T_\Om)\subset \Dom (T_{U\Om})$. Let $v\in \Dom(T_U T_\Om)= \Dom(T_\Om)$. Then $\Om v= w+\eta$ for $w\in L^p_m$ and $\eta\in\Rat_0^m(\BT)$. Then $U\Om v= Uw+U\eta$ and because $U$ is a minus function, $Uw\in L^p_m$ and $U\eta= w'+\eta'$ for $w'\in L^p_m$ and $\eta'\in \Rat_0^m(\BT)$. Hence $U\Om v = U w+ w'+ \eta '\in L^p_m + \Rat_0^m(\BT)$, so that $v\in\Dom(T_{U\Om})$.
\end{proof}

\begin{lemma}\label{L:z_minus_k}
Let $\Om\in\Rat^{m\times m}(\BT)$. Then for $k \geq 0$, $T_{z^{-k}\Om} = T_{z^{-k}I_k} T_\Om$.
\end{lemma}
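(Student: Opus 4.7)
The plan is to prove $T_{z^{-k}\Om}=T_{z^{-k}I_m}T_\Om$ by showing the two operators have identical domains and act identically on that domain. Since $z^{-k}$ is bounded on $\BT$, $T_{z^{-k}I_m}$ is a bounded operator on $H^p_m$, so $\Dom(T_{z^{-k}I_m}T_\Om)=\Dom(T_\Om)$. Note that Lemma \ref{L:WH_a} cannot be applied directly: although $z^{-k}I_m$ is a minus function, its inverse $z^k I_m$ has a pole at infinity and therefore is not a minus function in the sense of this paper, so a separate argument is required.

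To verify $\Dom(T_\Om)=\Dom(T_{z^{-k}\Om})$, suppose first that $f\in\Dom(T_\Om)$, so $\Om f=h+\rho$ with $h\in L^p_m$ and $\rho\in\Rat_0^m(\BT)$. The vector rational function $z^{-k}\rho$ is strictly proper with poles only at $0$ and on $\BT$, so a partial-fraction decomposition yields $z^{-k}\rho=\rho_1+\rho_2$, where $\rho_1\in\Rat_0^m(\BT)$ collects the polar part on $\BT$ and $\rho_2(z)=\sum_{j=1}^{k}c_j z^{-j}$ with $c_j\in\BC^m$ collects the polar part at the origin. Since $\rho_2$ is bounded on $\BT$, the identity $z^{-k}\Om f=(z^{-k}h+\rho_2)+\rho_1$ shows $f\in\Dom(T_{z^{-k}\Om})$. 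Conversely, if $f\in\Dom(T_{z^{-k}\Om})$ with $z^{-k}\Om f=h'+\rho'$, then $\Om f=z^k h'+z^k\rho'$; since $z^k\rho'$ has poles only on $\BT$ and is $O(z^{k-1})$ at infinity, splitting off its polynomial part writes $z^k\rho'=P+\rho''$ with $P\in\cP^m_{k-1}$ and $\rho''\in\Rat_0^m(\BT)$, so $\Om f=(z^k h'+P)+\rho''\in L^p_m+\Rat_0^m(\BT)$ and $f\in\Dom(T_\Om)$.

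For equality of the operator values on the common domain, I use the decomposition above. On the one hand, $T_{z^{-k}\Om}f=\BP(z^{-k}h+\rho_2)$, and because $\rho_2$ has Fourier spectrum contained in $\{-k,\ldots,-1\}$ the Riesz projection annihilates it, giving $T_{z^{-k}\Om}f=\BP(z^{-k}h)$. On the other hand, $T_{z^{-k}I_m}T_\Om f=\BP(z^{-k}\BP h)$. Writing $h=\BP h+(I-\BP)h$ and using that $z^{-k}(I-\BP)h$ has Fourier spectrum contained in the strictly negative integers, so that $\BP(z^{-k}(I-\BP)h)=0$, yields $\BP(z^{-k}h)=\BP(z^{-k}\BP h)$. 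Hence the two operators coincide.

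The one point needing care is the partial-fraction split of $z^{-k}\rho$: one must simultaneously isolate the pole at $0$ into a Laurent polynomial (whose Riesz projection therefore vanishes) and keep the remainder strictly proper with poles on $\BT$. Combined with the standard Fourier-support argument that $\BP$ annihilates purely negative-frequency terms, this is exactly what makes the factorization $T_{z^{-k}\Om}=T_{z^{-k}I_m}T_\Om$ valid despite the failure of the minus-function hypothesis for $z^k I_m$.
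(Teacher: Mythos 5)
Your proof is correct and follows essentially the same route as the paper's: both directions of the domain equality are obtained by splitting the rational remainder into its polar part at $0$ (respectively its polynomial part of degree at most $k-1$) plus a term in $\Rat_0^m(\BT)$, and the equality of values rests on the fact that $\BP$ annihilates the purely negative-frequency terms $z^{-k}(I-\BP)h$ and $\rho_2$. The only difference is cosmetic: you run the value computation from the $\Dom(T_\Om)$ side while the paper runs it from the $\Dom(T_{z^{-k}\Om})$ side, which changes nothing of substance.
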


\begin{proof}[\bf Proof]
It suffices to show that $\Dom (T_\Om) = \Dom (T_{z^{-k}\Om})$ and that $T_{z^{-k}\Om}$ and $T_{z^{-k}I_m}T_\Om$ coincide on $\Dom (T_\Om)$. Suppose that  $f\in\Dom (T_{z^{-k}\Om})$. Then $z^{-k}\Om f = h + \rho$ with $h\in L_m^p$ and $\rho\in\Rat_0^m(\BT)$. Then $\Om f = z^k h + z^k \rho$ and $z^k h$ is still in $L_m^p$. Apply the Euclidian algorithm entrywise to write $z^k \rho = \rho_1 + \rho_2$ with $\rho_2\in \Rat_0^m(\BT)$ and $\rho_1\in\cP_{k-1}^m$. Then $\Om f = (z^k h + \rho_1) + \rho_2 \in L_m^p+\Rat_0^m(\BT)$ from which it follows that $f\in\Dom(T_\Om)$. Let $r\in\cP_{k-1}^m$ so that $\BP z^k h = z^k \BP h +r$. Then we have
\begin{align*}
T_{z^{-k}I_m} T_\Om f & = T_{z^{-k}I_m} \BP (z^k h +\rho_1)= T_{z^{-k}I_m} (z^k \BP h +r +\rho_1)\\
&= T_{z^{-k}I_m} z^k \BP h =\BP h = T_{z^{-k}\Om} f.
\end{align*}
Thus $\Dom (T_{z^{-k}\Om})\subset \Dom (T_\Om)$ and the operators $T_{z^{-k}I_m} T_\Om$ and $T_{z^{-k}\Om}$ coincide on $\Dom (T_{z^{-k}\Om})$. To prove the converse inclusion, suppose $u\in \Dom (T_\Om)$. Then $\Om u = w + \eta$ for $w\in L_m^p $ and $\eta\in \Rat_0^m(\BT)$, so that $z^{-k}\Om u = z^{-k}w + z^{-k}\eta \in L_m^p + \Rat_0^m(\BT \cup \{0\})$. Now write $z^{-k}\eta = \eta_1 + \eta_2$ with $\eta_2\in\Rat_0^m(\BT)$ and $\eta_1\in\Rat^m$, with only a pole at 0. Then $\eta_1\in L_m^p$ and $\BP \eta_1 = 0$. We now have $z^{-k} \Om u  = (z^{-k} w + \eta_1) + \eta_2 \in L_m^p + \Rat_0^m(\BT)$ and so $u\in \Dom (T_{z^{-k}\Om})$.
\end{proof}

\begin{proof}[\textbf{Proof of Theorem \ref{T:Fact_Toeplitz}}]
Factor $\Om = z^{-k}\Om_- \Om_\circ P_0\Om_+$ as in Theorem \ref{T:fact}. Then $P_0 \Om_+$ is a plus function, and $\Om_-$ is a minus function whose inverse is also a minus function. Thus, by Lemma \ref{L:WH_a}, we can write $T_\Om = T_{\Om_-} T_{z^{-k}\Om_\circ} T_{P_0} T_{\Om_+}$.
Applying Lemma \ref{L:z_minus_k} we have
$
T_\Om = T_{\Om_-} T_{z^{-k}I_m} T_{\Om_\circ} T_{P_0} T_{\Om_+}.
$
\end{proof}

\section{Fredholm properties}\label{S:FredholmM}

Using the Wiener-Hopf type factorization from Theorem \ref{T:fact} and the corresponding factorization of Toeplitz operators in Theorem \ref{T:Fact_Toeplitz} we are now in a position to prove Theorem \ref{T:Fredholm} via a partial reduction to the diagonal case.

\begin{proof}[\textbf{Proof of Theorem \ref{T:Fredholm}}]
Let $\Om\in\Rat^{m\times m}$ be factored as in Theorem \ref{T:fact}:
\[
\Om(z)=z^{-k}\Om_-(z) \Om_\circ(z) P_0(z)\Om_+(z),
\]
with $k\geq 0$, $\Om_+,\Om_-,\Om_\circ\in\Rat^{m\times m}$ so that $\Om_-$ a minus function whose inverse is a minus function, $\Om_+$ a plus function whose inverse is a plus function, $\Om_\circ=\diag(\phi_1,\ldots,\phi_m)$ a diagonal matrix function whose entries have poles and zeroes only on $\BT$ and $P_0$ a lower triangular polynomial matrix with $\det P_0(z)=z^n$ for some integer $n\geq 0$. Applying Theorem \ref{T:Fact_Toeplitz} gives
\[
T_\Om = T_{\Om_-} T_{z^{-k}} T_{\Om_\circ}T_{P_0} T_{\Om_+}.
\]
Given that $\Om_-$ and its inverse are minus functions, $T_{\Om_-}$ is invertible with $T_{\Om_-}^{-1} = T_{\Om_-^{-1}}$. Similarly, $T_{\Om_+}$ is invertible and $T_{\Om_+}^{-1} = T_{\Om_+^{-1}}$. Thus $T_{\Om_-}$,  $T_{\Om_-^{-1}}$, $T_{\Om_+}$ and $T_{\Om_+^{-1}}$ are all Fredholm with index 0 and
\[
T_{z^{-k}I_m} T_{\Om_\circ}T_{P_0} = T_{\Om_-^{-1}} T_\Om T_{\Om_+^{-1}},\quad T_\Om   = T_{\Om_-} T_{z^{-k}I_m} T_{\Om_\circ}T_{P_0} T_{\Om_+}.
\]
Applying item (iii) of Theorem IV.2.7 from \cite{G66} (see also \cite{GK57}) it now follows that $T_\Om$ is Fredholm if and only if $T_{z^{-k}I_m} T_{\Om_\circ}T_{P_0}$ is Fredholm and in that case we have
\[
\Index (T_\Om) = \Index (T_{z^{-k}I_m} T_{\Om_\circ}T_{P_0}).
\]

Since $P_0(z)$ is a matrix polynomial with zeroes only at $0$, $T_{P_0}$ is a bounded Fredholm operator. Therefore, if $T_{z^{-k}} T_{\Om_\circ}T_{P_0}$ is Fredholm, then we find that $T_{z^{-k}} T_{\Om_\circ}$ is Fredholm, by Theorem 3.4 of \cite{S67}, while conversely, if $T_{z^{-k}I_m} T_{\Om_\circ}$ is Fredholm, then $T_{z^{-k}I_m} T_{\Om_\circ}T_{P_0}$ by Theorem IV.2.7 from \cite{G66}. Furthermore, in this case we have
\[
\Index (T_{z^{-k}I_m} T_{\Om_\circ}T_{P_0})=\Index(T_{z^{-k}I_m} T_{\Om_\circ}) + \Index(T_{P_0}).
\]
Moreover, consider the Wiener-Hopf factorization $P_0(z)=\Phi_-(z) D_0(z) \Phi_+(z)$ of $P_0$, cf., Theorem XXIV.3.1 in \cite{GGK2}, with $\Phi_-$ and $\Phi_-^{-1}$ minus functions, $\Phi_+$ and $\Phi_+^{-1}$ plus functions, and $D_0(z)=\diag(z^{k_1},\ldots,z^{k_m})$ for integers $k_1,\ldots,k_m$. Since $\det P_0(z)=z^n$ with $n=\sum_{j=1}^m n_j$, the sum of exponents of $z$ on the diagonal of $P_0$, we have $\det D_0(z)=z^n$ so that $-\sum_{j=1}^m n_j=-n=-\sum_{j=1}^m k_j$ is the Fredholm index of $T_{P_0}$.

Since $\Om_\circ=\diag(\phi_1,\ldots,\phi_m)$ and $z^{-k}I_m$ are diagonal matrices, the question on Fredholm properties of $T_{z^{-k}\Om_\circ}$ viz-a-viz $T_{z^{-k}I_m}T_{\Om_\circ}$, reduces to the scalar case for each diagonal entry $z^{-k}\phi_j$, $j=1,\ldots,m$. It follows from Proposition \ref{P:diagonal} that $T_{z^{-k}\Om_\circ}$ is Fredholm if and only if all operators $T_{z^{-k}\phi_j}$, $j=1,\ldots,m$, are Fredholm, and in this case
\[
\Index(T_{z^{-k}\Om_\circ})=\sum_{j=1}^m \Index(T_{z^{-k}\phi_j}).
\]
We can thus invoke Theorem 1.1 from \cite{GtHJR1} to conclude that $T_{z^{-k}\Om_\circ}$ is Fredholm, or equivalently, $T_\Om$ is Fredholm, if and only if non of the entries $\phi_j$ of $\Om_0$ has a zero on $\BT$ (equivalently, $\det\Om_\circ (z)$ has no zeroes on $\BT$). Furthermore, in case $T_{z^{-k}\Om_\circ}$ is Fredholm we have $\phi_j=1/q_j$ for some $q_j\in\cP$ and
\[
\Index(T_{z^{-k}\Om_\circ}) = \sum_{j=1}^m k+ \degr q_j=mk + \sum_{j=1}^m \degr q_j.
\]
Combining the above observations we obtain that $T_\Om$ is Fredholm if and only if $\det \Om_0(z)$ has no zeroes on $\BT$, and in case $T_\Om$ is Fredholm we have
\begin{align*}
\Index(T_\Om)
&=\Index (T_{z^{-k}I_m} T_{\Om_\circ}T_{P_0})=\Index(T_{z^{-k}I_m} T_{\Om_\circ}) + \Index(T_{P_0})\\
&= mk + \sum_{j=1}^m \degr q_j - \sum_{j=1}^m n_j,
\end{align*}
where $n_1,\ldots,n_m\geq 0$ are the exponents of $z$ on the diagonal entries of $P_0$ and $q_1,\ldots,q_m$ are the denominators of the co-prime representations of the diagonal entries of $\Om_\circ$. This completes the proof of Theorem \ref{T:Fredholm}.
\end{proof}

\begin{remark}
It now follows from the decomposition of $\Om(z) = \sbm{ 1 & \frac{1}{z-1}\\ 0 & 1 }$ in Section \ref{S:ExampleM} that $T_\Om$ is not Fredholm, despite $\det \Om(z)$ not having a zero on $\BT$.
\end{remark}

From the proof of Theorem \ref{T:Fredholm} it follows that $T_\Om$ is Fredholm if and only if $T_\Xi$ is Fredholm, where
\[
\Xi(z)=z^{-k}\Om_\circ(z)P_0(z)
\]
with $k$, $\Om_\circ$ and $P_0$ from any Wiener-Hopf type factorization of $\Om$ as in Theorem \ref{T:fact}. Moreover, if $T_\Om$ is Fredholm, then $\Index(T_\Om)=\Index(T_\Xi)$, in fact, one has $\dim \kernel T_\Om = \dim \kernel T_\Xi$ and $\codim \Ran T_\Om = \codim \Ran T_\Xi$. Assume $T_\Om$ is Fredholm, so that $\Om_\circ$ has the form $\Om_\circ(z)=\diag(1/q_1,\ldots,1/q_m)$ with $q_j$ a divisor of $q_{j+1}$ for $j=1,\ldots,m-1$. Let the diagonal entries of $P_0$ be $z^{n_1},\ldots,z^{n_m}$ and let $p_{i,j}\in\cP$ be the lower triangular off-diagonal entry in position $(i,j)$, for $i>j$. By construction $\degr p_{i,j}<n_i$. Then $\Xi$ has the form
\begin{equation}\label{MatRepXi}
 \Xi(z)=\mat{\frac{z^{n_1}}{z^k q_1(z)}&0&\cdots&0\\ \frac{p_{2,1}(z)}{z^k q_2(z)}&\frac{z^{n_2}}{z^k q_2(z)}&\ddots&\vdots\\ \vdots&\ddots&\ddots&0\\ \frac{p_{m,1}(z)}{z^k q_m(z)}&\cdots&\frac{p_{m,m-1}(z)}{z^k q_m(z)}&\frac{z^{n_m}}{z^k q_m(z)}}.
\end{equation}
One may wonder to what extent this form is unique. For instance, is it maybe the case that the numbers $n_1,\ldots,n_m$ are unique? The following examples show that this is not the case.

\begin{example}\label{E:notUnique1}
Let
\[
\Xi(z)=\mat{\frac{1}{(z-1)^3} & 0 \\ \frac{z^2}{(z-1)^4} & \frac{z^5}{(z-1)^4}}.
\]
This function is of the form $\Xi_0(z)P_0(z)$, with $\Xi_0(z)={\rm diag\,} \left(\frac{1}{(z-1)^3}  , \frac{1}{(z-1)^4}\right)$ and $P_0(z)=\mat{1 & 0 \\ z^2 & z^5 }$. So for this factorization we have that $n_1=0, n_2=5$, and $k=0$.

Introduce
\[
\Xi_-(z)=\mat{ -1 & \frac{z-1}{z^2}\\ 0 & 1}, \qquad \Xi_+(z)=\mat{-z^3 & 1 \\ 1 & 0},
\]
then $\Xi_-$ and its inverse are minus functions and $\Xi_+$ and its inverse are plus functions,
and
\[
\Xi(z)=\Xi_-(z)^{-1}\mat{ \frac{z^3}{(z-1)^3} & 0 \\ 0 &\frac{z^2}{(z-1)^4}  }\Xi_+(z)^{-1},
\]
which is more like an ordinary Wiener-Hopf factorization. Note that the middle factor is also of the form as in Theorem 1.2, but now with $n_1=3$ and $n_2=2$. In line with Theorem \ref{T:factUnique}, the denominators on the diagonal, $(z-1)^3$ and $(z-1)^4$, do not change.

Both factorizations tell us that $T_\Xi $ is Fredholm with index $2$.
It is obvious we would prefer the second factorization, as in that factorization the degrees of $q_1$ and $q_2$ and the $n_1$ and $n_2$ give us information on the dimension of the kernel and codimension of the range of $T_{\Xi}$ using Proposition \ref{P:diagonal}. In fact, it can be checked directly that the dimension of the kernel of $T_\Xi$ is two and since $T_\Xi$ is Fredholm with index two it follows that $T_\Xi$ is onto.
\end{example}

The example raises the question whether it might always be possible to diagonalize the middle term \eqref{MatRepXi} by multiplying on the left with a minus function that has a minus function inverse and on the right with a plus function that has a plus function inverse. The following example shows a more general $2 \times 2$ case, where the procedure from Example \ref{E:notUnique1} can be carried out.

\begin{example}\label{E:notUnique2}
Start by considering
\[
\Omega(z)=
\mat{\frac{z^{k_1}}{q_1(z) }& 0 \\ \frac{d(z)}{q_2(z) }& \frac{z^{k_2}}{q_2(z)}}
\]
where $q_1$ a divisor of $q_2$ and $q_2$ has all its roots on $\mathbb{T}$, and ${\rm deg\, }d <k_2$.
Write $d(z)=z^{k_{12}}d_0(z)$, with $d_0(0)\not= 0$, and $k_{12}+{\rm deg\, }d_0 <k_2$. Then $d_0(z)$ and $z^{k_2-k_{12}}$ have greatest common divisor $1$, and so by the Bezout identity there are polynomials $p_1(z)$ and $p_2(z)$ such that
$d_0(z)p_1(z)+z^{k_2-k_{12}}p_2(z)=1$, and the degree of $p_1(z)$ is less than or equal to $k_2-k_{12}-1$, while the degree of $p_2(z)$ is less than or equal to the degree of $d_0$ minus one.
Set
\[
\Omega_+(z)=\mat{ -z^{k_2-k_{12}} & p_1(z) \\ d_0(z) & p_2(z)}.
\]
Then $\Omega_+$ is a plus function, and since the determinant of $\Omega_+$ is one by the Bezout identity, also the inverse of $\Omega_+$ is a plus function.
Now
\[
\Omega(z)\Omega_+(z) = \mat{
-\frac{z^{k_1+k_2-k_{12}}}{q_1(z)}  & \frac{z^{k_1}p_1(z)}{q_1(z)}\\
0 & \frac{z^{k_{12}}}{q_2(z)}}.
\]
Write $q_2(z)=q_0(z)q_1(z)$. Now assume that
\begin{equation}\label{condition}
k_1+{\rm deg\, } q_0+{\rm deg\,} p_1 \leq k_{12}.
\end{equation}
Set
\[
\Omega_-(z)=\mat{
-1 & \frac{z^{k_1}p_1(z)q_0(z)}{z^{k_{12}}}\\
0 & 1}
\]
which is a minus function with an inverse that is also a minus function. Then
\[
\Omega_-(z) \Omega(z)\Omega_+(z) = \mat{
\frac{z^{k_1+k_2-k_{12}}}{q_1(z)}& 0 \\ 0 &  \frac{z^{k_{12}}}{q_2(z)}} .
\]
Note that in the previous example condition \eqref{condition} is satisfied. Without \eqref{condition}, however, $\Omega_-$ above would have a pole at $\infty$ and hence would not be a minus function.
\end{example}

\paragraph{\bf Data availability statement}
Data sharing is not applicable to this article as no datasets were generated or analysed during the current study.

\paragraph{\bf Acknowledgments}
This work is based on research supported in part by the National Research Foundation of South Africa (NRF) and the DSI-NRF Centre of Excellence in Mathematical and Statistical Sciences (CoE-MaSS). Any opinion, finding and conclusion or recommendation expressed in this material is that of the authors and the NRF and CoE-MaSS do not accept any liability in this regard.

\end{document}